\newtheorem{thm}{Theorem}[section]
\newtheorem {asp}{Assumption}[section]
\newtheorem{lm}{Lemma}[section]
\newtheorem{defn}{Definition}[section]
\newtheorem{rem}{Remark}[section]
\newtheorem{exm}{Example}[section]
\DeclareMathOperator{\trace}{tr}
\newcommand{\eps}{\varepsilon}
\newcommand{\B}{\mathcal{B}}
\newcommand{\p}{\mathfrak{p}}
\newcommand{\m}{\mathfrak{m}}
\newcommand{\C}{\mathcal{C}}
\newcommand{\D}{\mathcal{D}}
\newcommand{\DD}{\mathbb{D}}
\newcommand{\M}{\mathcal{M}}
\newcommand{\F}{\mathcal{F}}
\newcommand{\E}{\mathbb{E}}
\newcommand{\LL}{\mathcal{L}}
\newcommand{\N}{{\mathbb{Z}}_+}
\newcommand{\Z}{\mathbb{Z}}
\newcommand{\PP}{\mathbb{P}}
\newcommand{\R}{\mathbb{R}}
\newcommand{\BF}{\mathbb{F}}
\numberwithin{equation}{section}
\newcommand{\1}{\boldsymbol{1}}
\newcommand{\nz}{{n_0}}
\newcommand{\cd}{(\cdot)}
\newcommand{\beq}[1]{\begin{equation} \label{#1}}
\newcommand{\eeq}{\end{equation}}
\newcommand{\wdt}{\widetilde}
\newcommand{\bed}{\begin{displaymath}}
\newcommand{\eed}{\end{displaymath}}
\newcommand{\bea}{\bed\begin{array}{rl}}
\newcommand{\eea}{\end{array}\eed}
\newcommand{\ad}{&\!\!\!\disp}
\newcommand{\barray}{\begin{array}{ll}}
\newcommand{\earray}{\end{array}}
\def\disp{\displaystyle}
\newcommand{\rr}{{\Bbb R}}
\def\bar{\overline}
\def\hat{\widehat}
\def\a.s{\text{\;a.s.\;}}
\def\bnu{\boldsymbol{\nu}}
\begin{document}

\title{Recurrence for linearizable switching diffusion with past dependent switching and countable state space\thanks{This
research was supported in part by the
Air Force Office of Scientific Research under FA9550-15-1-0131.}}
\author{Dang Hai  Nguyen\thanks{Department of Mathematics, Wayne State University, Detroit, MI
48202,
dangnh.maths@gmail.com.} \and
George Yin\thanks{Department of Mathematics, Wayne State University, Detroit, MI
48202,
gyin@math.wayne.edu.}}
\maketitle

\maketitle

  \centerline{In honor of Jiongmin Yong on the occasion of  his 60th Birthday}   

\bigskip

 \centerline{(Communicated by)}

\begin{abstract}
This work continues
 and substantially extends
 our recent work on switching diffusions with the switching
 processes
  that depend on the past states and that take
   values in a countable state space.
That is, the discrete
components of the two-component processes take values in a countably infinite set and its switching rates at
 current time
 depend on the current value of the continuous component.
This paper focuses on recurrence,
positive recurrence, and weak stabilization of such systems.
In particular,
the paper
aims to providing more verifiable conditions on recurrence and positive recurrence and related issues.
Assuming that the system is linearizable, it provides  feasible conditions focusing
on the coefficients of the systems
for positive recurrence.
Then  linear feedback controls for weak stabilization are considered.
Some illustrative examples are also given.

\bigskip
\noindent {\bf Keywords.} Switching diffusion, past-dependent switching, recurrence, ergodicity.

\bigskip
\noindent{\bf Subject Classification.} 34D20, 60H10, 
93D05, 93D20.
\end{abstract}

\newpage
\section{Introduction}\label{sec:int}
In the new era, systems arising in engineering practice, social network, biological and medical applications, and ecological modeling etc. demand
more feasible models.
 Owing to the pressing need, significant effort has been devoted to developing  sophisticated system formulations for control, optimization, and stability of systems.
Emerging and existing applications in  wireless communications, queueing networks, biological models,
ecological systems, financial engineering, and social networks demand the
mathematical modeling, analysis, and computation
of hybrid models in which continuous dynamics and discrete events coexist.
A switching diffusion is a two-component process $(X(t),\alpha(t))$, a continuous component and a discrete component taking values in a discrete set (a set consisting of isolated points).
When the discrete component takes a value $i$ (i.e., $\alpha(t)=i$),
the continuous component $X(t)$ evolves according to the diffusion process whose drift and diffusion coefficients depend on $i$.
Because of their importance, many works have been devoted to such hybrid dynamic systems; see \cite{KZY,SX,ZY2,ZY,YZW,ZWYJ} and the references therein, and also \cite{YinZ10} for a comprehensive study to the
 Markov process  $(X(t),\alpha(t))$
in which the generator of $\alpha(t)$ depends on the current state $X(t)$.
As a result, switching diffusion models become increasingly popular.

Assuming that the systems are running in continuous times,
stemming from stochastic differential equations based models and random discrete events,
switching diffusions came into being, which feature in continuous states and discrete events
(discrete states)
 coexist and interact.
  Earlier considerations required the discrete states being a continuous-time Markov chain that is independent of the driving random disturbances
   (the Brownian motion) for the continuous states.
Subsequent studies put emphases on the case the discrete states (discrete random events) depend on the continuous states \cite{YinZ10}, which substantially enlarges the systems can be treated but makes the analysis much more difficult to handle.

Until very recently, most of
the works treat
 $\alpha(t)$ as a process taking values in a finite set.
In very recent considerations,   $\alpha(t)$ is allowed to take values in
 a countable state space, with the proviso
 the systems being memoryless.
To be able to treat more realistic models and to broaden the applicability,
we undertook the task of
 investigating the dynamics of $(X(t),\alpha(t))$ in which $\alpha(t)$ has a countable state space
and its switching intensities depend on the history of the continuous component  $X(t)$.
As a first attempt, motivated by, for example, applications in queueing systems and control systems among others, a systematic study of
such switching diffusions was initiated in \cite{DY1, DY2}.
In \cite{DY1}, we gave precise formulation of the process  $(X(t),\alpha(t))$ and established
the existence and uniqueness of solutions together with such
 properties as Markov-Feller property
 and Feller property of  function-valued stochastic processes associated with our processes
under suitable conditions.
In \cite{DY2}, general conditions for recurrence and ergodicity of
switching diffusion processes with past-dependent switching having a countable state space
are given using appropriate Lyapunov functions.
In practice, it is often difficult to find a suitable Lyapunov function.
With such motivations, this paper aims to provide easily verifiable conditions for positive recurrence
when the systems are linearizable.
As coined by Wonham, recurrence is also known as weak stability.
This paper also
examines weak-stabilization problems, which
 is of practical importance because
in control and optimization, dealing with long-term average
cost problems, we often need to replace the
instantaneous transition
probability measures by invariant measures whenever possible. To make the replacement,
we need the existence of the corresponding invariant
measures and the convergence of transition probability measure to the invariant measures.
When the systems encountered are not positive recurrent (also known as weakly stable),
 it will be necessary
to design controls so that the controlled switching diffusions
are weakly stable or positive recurrent, which guarantees the
existence of stationary measures.

The rest of the paper is organized as follows. The formulation of switching diffusions with past-dependent switching and
countably many possible switching locations is given in Section \ref{sec:2}.
Section \ref{sec:3} concentrates on recurrence and ergodicity, and
provides certain sufficient conditions for positive recurrence and ergodicity of
the related switching diffusions.
In Section \ref{sec:5}, we concentrate on
weak stabilization of switching diffusions by a linear feedback controls.
To demonstrate our results, we provide a few examples
in Section \ref{sec:6}.
 A short summary is given in Section \ref{sec:rem}.
Finally, for completeness,
an appendix is provided at the end of the paper 
 to cover some technical complements.

\section{Formulation and Examples}\label{sec:2}
Let $r$ be a fixed positive number.
Denote by $\C([a,b],\R^\nz)$ the set of  $\R^\nz$-valued continuous functions defined on $[a, b]$.
In what follows,
we mainly work with $\C([-r,0],\R^\nz)$, and simply denote it by $\C:=\C([-r,0],\R^\nz)$.
For $\phi\in\C$, we use the sup norm $\|\phi\|=\sup\{|\phi(t)|: t\in[-r,0]\}$.
For $t\geq0$, we denote  by $y_t$
the
 segment function or memory segment function $y_t =\{ y(t+s): -r \le s \le 0\}$.
For $x\in\R^\nz$, denote by $|x|$ the Euclidean norm of $x$.
Let $(\Omega,\F,\{\F_t\}_{t\geq0},\PP)$ be a complete filtered probability space with the filtration $\{\F_t\}_{t\geq 0}$ satisfying the usual condition,  i.e., it is increasing and right continuous while $\F_0$ contains all $\PP$-null sets.
Let $W(t)$ be an $\F_t$-adapted and $\R^d$-valued Brownian motion.
Suppose $b(\cdot,\cdot): \R^\nz\times\N\to\R^\nz$ and $\sigma(\cdot,\cdot): \R^\nz\times\N\to\R^{\nz\times d}$, where $\N= {\mathbb N} \setminus \{0\}=\{1,2,\dots\}$,  the set of positive integers.
Consider the two-component process $(X(t),\alpha(t))$, where
 $\alpha(t)$ is a pure jump process taking value in  $\N$, and $X(t)$ satisfies
\begin{equation}\label{eq:sde} dX(t)=b(X(t), \alpha(t))dt+\sigma(X(t),\alpha(t))dW(t).\end{equation}
We assume that the jump intensity of $\alpha(t)$ depends on the trajectory of $X(t)$ in the interval $[t-r,t]$,
that is, there are functions $q_{ij}(\cdot):\C\to\R$ for $i,j\in\N$ satisfying
$q_{ij}(\phi)\geq0$, for all $i\ne j$ and $\sum_{j=1}^\infty q_{ij}(\phi)=
0$. Denote
$q_{i}(\phi) = \sum^\infty_{j\not = i} q_{ij}(\phi)$ for each $i\in \N$ and each $\phi\in\C$ and $Q(\phi)=(q_{ij}(\phi))$.
For implicity, we assume that $q_i(\phi)$ is uniformly bounded (for treating more general classes of functions, cf. \cite{DY1}) and
\begin{equation}\label{eq:tran}\begin{array}{ll}
&\disp \PP\{\alpha(t+\Delta)=j|\alpha(t)=i, X_s,\alpha(s), s\leq t\}
\quad=q_{ij}(X_t)\Delta+o(\Delta) \text{ if } i\ne j \
\hbox{ and }\\
&\disp \PP\{\alpha(t+\Delta)=i|\alpha(t)=i, X_s,\alpha(s), s\leq t\}
\quad=1-q_{i}(X_t)\Delta+o(\Delta).\end{array}\end{equation}
Note that in \eqref{eq:tran}, in contrast to \cite{YinZ10},
 in lieu of $Q(X(t))$, $Q(X_t))$ (i.e., the segment process
 $X_t$) is used.
A strong solution to  \eqref{eq:sde} and \eqref{eq:tran} on $[0,T]$ with initial data $(\xi,i_0)$
being
$\C\times\N$-valued and $\F_0$-measurable random variable,
is an $\F_t$-adapted process $(X(t),\alpha(t))$ such that
\begin{itemize}
  \item $X(t)$ is continuous and $\alpha(t)$ is cadlag (right continuous with left limits) with probability 1 (w.p.1).
  \item $X(t)=\xi(t)$ for $t\in[-r,0]$ and  $\alpha(0)=i_0$
  \item $(X(t),\alpha(t))$ satisfies \eqref{eq:sde} and \eqref{eq:tran} for all $t\in[0,T]$ w.p.1.
\end{itemize}

Alternatively, the switching diffusion above may also be given
as follows.
Define $h:\C\times\N\times\R\mapsto\R$ by
$h(\phi, i, z)=\sum_{j=1, j\ne i}^\infty(j-i)\1_{\{z\in\Delta_{ij}(\phi)\}}.$
The process $\alpha(t)$ can be defined as the solution to
$$d\alpha(t)=\int_{\R}h(X_t,\alpha(t-), z)\p(dt, dz),$$	
where $a(t-)=\lim\limits_{s\to t^-}\alpha(s)$ and $\p(dt, dz)$ is a Poisson random measure with intensity $dt\times\m(dz)$ and $\m$ is the Lebesgue measure on $\R$ such that
$\p(dt, dz)$ is
independent of the Brownian motion $W(\cdot)$.
The pair $(X(t),\alpha(t))$ is therefore a solution to
\begin{equation}\label{e2.3}
\begin{cases}
dX(t)=b(X(t), \alpha(t))dt+\sigma(X(t),\alpha(t))dW(t) \\
d\alpha(t)=\disp\int_{\R}h(X_t,\alpha(t-), z)\p(dt, dz).\end{cases}
\end{equation}
For subsequent use, define
\begin{equation}\label{A-def} A(x,i)=\sigma(x,i) \sigma^{\sf T}(x,i).\end{equation}

To get some insight, we consider a couple of examples below.
One is a fluid model from queueing systems, which begins with a switched ordinary differential equations, where the probability distribution of the switching process depends on the past history. The other
example stems from applications in  ecological systems.

\begin{exm}\label{exm2} {\rm
In this example, we consider an extension of the Markov-modulated-rate fluid models treated in \cite{YZZ13}.
Stemming from queueing systems, this example is simple in that it is even without the Brownian motion part, but it explains the modeling view point of the past depend switching with a countable state space.
Consider the fluid buffer model with an infinite
capacity. Let $X(t)$ be the amount of fluids in the buffer at
time $t$,  known as  buffer content or  buffer
level. The fluids enter and leave the buffer at random rates. The
input and output of fluids  are modulated by a switching process
$\alpha(t)$ with state space $\N=\{1,2,\ldots\}$,  known
as a stochastic external environment. Using $\alpha(t)$ to
determine the input and output rates, we introduce a
 drift function $f(\cdot)$ (see Kulkarni \cite{kulkarni}):
$ f\cd: \N  \mapsto  (-\infty, \infty).$
Different from \cite{YZZ13}, we do not assume $\alpha(t)$ to be a Markov chain,
but rather, we assume that the transition rates satisfy \eqref{eq:tran}.
That is, the transition rates depend on the history of the process $X(t)$.

Note that formally, the net rate that is
the difference of the input  and the output rates at time
$t$ is given by $f(\alpha(t))$. The dynamics of the
buffer content $\{X(t): t\geq 0\}$ can be described
by the following differential equation:
\beq{dyna-equa}
\barray
\disp\frac{d}{dt}X(t)\ad =\left\{
\begin{array}{ll}
f(\alpha(t)), &\mbox{if $X(t)>0$}\\
(f(\alpha(t)))^+, &\mbox{if $X(t)=0$},
\end{array}
\right.\\
\ad = f(\alpha(t))\1_{\{X(t)>0\}}
+(f(\alpha(t)))^+\1_{\{X(t)=0\}},\earray
\eeq
where $x^+=\max\{x,0\}$.
Note that $X(t)$ can be rewritten as
\beq{cont-0}
X(t)=X(0)+Y(t)-\left(\inf_{0\leq s \leq
t}\left\{Y(s)+X(0)\right\}\right)\wedge 0 \eeq
with
$$Y(s)=\int^s_0f(\alpha(v))dv,$$
where $a \wedge b=\min (a,b)$ for two real numbers $a$ and $b$,
and $$-\left(\inf_{0\leq s \leq t}\{Y(s)+X(0)\}\right)\wedge 0$$ measures
the amount of potential output lost up to time $t$ due to
the emptiness of the buffer.
Many of the current interests are concerned with the fluid model above such as long-run average control problems or stability of the systems.
}
\end{exm}

\begin{exm}\label{exm:1a} {\rm
Consider the evolution of a predator-prey model
in which the predator is macro and the prey is micro.
Denote by $X(t)$ and $\alpha(t)$ the density of the predator species
and the number of the prey at time $t$ respectively.
In \cite{MC},
the dynamics of the predator is given by a differential equation
\begin{equation}\label{ex3-e1}
d X(t)=X(t)\Big(\rho B\alpha(t)-D-CX(t)\Big)dt
\end{equation}
and
the number of the prey evolves according to a birth and death process
with switching rates given by
\begin{equation}\label{ex3-e2}
\begin{aligned}
\PP&\left\{\alpha(t+s)=j\Big|\alpha(t)=n,X(t)\right\}\\
&
=\begin{cases}
\beta ns+O(s)&\,\text{ if } j=n+1,n\geq 1\\
n(\delta+cn+BX(t))s+O(s)&\,\text{ if } j=n-1,n\geq 1\\
1-n(\beta+\delta+cn+BX(t))s+O(s)&\,\text{ if } n=j,n\geq 2\\
1-\beta s+O(s)&\,\text{ if } j=n=1\\
O(s)&\,\text{ otherwise. }
\end{cases}
\end{aligned}
\end{equation}
In \eqref{ex3-e2},
$\beta$ and $\delta$ represent
the birth rate and the death rate of the predator,
$D$ is the  death rate of the prey,
$C$ and $c$ are the intraspecific competition rates
of the prey and predator respectively.
$B$ is the loss rate of the prey due to the predation,
while $\rho$ is the intake rate of the predator.

Suppose that
the  dynamics of $X(t)$ is subject to  environmental noise
describled by a Brownian motion.
Since
the life cycle of a micro species is usually very short,
so it is reasonable to assume that the dynamical equation
 of $X(t)$ is past-independent.

On the other hand, for the micro species $\alpha(t)$, the reproduction process of $\alpha(t)$ is assumed
 to be non-instantaneous.
More precisely, suppose that the reproduction depends on the period of time from egg formation to hatching, say $r$.
then we can assume that
the switching rate of $\alpha(t)$
depends on a history of $X(\cdot)$ from $t-r$ to $t$
rather than the current state $X(t)$.
Thus, we may replace $X(t)$ in \eqref{ex3-e2}
by a functional $\int_{-r}^0X(t+u)\mu(du)$
where $\mu$ is some measure on $[-r,0]$.

With these assumptions,
the model \eqref{ex3-e1} and \eqref{ex3-e2}
will become
\begin{equation}\label{ex3-e3}
\begin{cases}
&d X(t)=X(t)\Big(\rho B\alpha(t)-D-CX(t)\Big)dt+\sigma X(t)dW(t)\\
&\PP\left\{\alpha(t+s)=j\big|\alpha(t)=n,X(s), s\leq t\right\}=q_{ij}(X_t)s+O(s),\, n\ne j\\
&\PP\left\{\alpha(t+s)=n\big|\alpha(t)=n,X(s), s\leq t\right\}=1-q_{n}(X_t)s+O(s),\, n\geq 1.
\end{cases}
\end{equation}
where
$$
q_{ij}(\phi)
=\begin{cases}
\beta ns+O(s)&\,\text{ if } j=n+1,n\geq 1\\
n\left(\delta+cn+B\int_{-r}^0\phi(u)\mu(du)\right)s+O(s)&\,\text{ if } j=n-1,n\geq 1\\
0&\,\text{ if } j\notin \{n-1, n, n+1\},
\end{cases}
$$
and $q_i(\phi)=\sum_{j\ne i}q_{ij}(\phi)$.
We want to answer the question: under which conditions the species will be permanent forever or  they will extinct at some instance? Whether or not there is an invariant measure associated with the system under consideration. These questions are related to the stability and ergodicity of the
corresponding stochastic systems.
}
\end{exm}

\section{Recurrence}\label{sec:3}

In  reference \cite{DY2}, we examined the issue of recurrence and obtained sufficient conditions ensuring positive recurrence. We also examined the ergodicity
of the underlying processes.
In this paper, as a point of departure, we aim to obtain more easily verifiable conditions. We are trying to derive conditions that focusing on the coefficients of the drifts and diffusion matrices. As pointed out by \cite{Wonham}, recurrence may be
called weak stability. It is well known that stability normally is referred to properties of solutions at certain stationary points.  Likewise, recurrence can also be regarded as such. However, the neighborhood is no longer about a finite point, but rather the neighborhood of infinity.
In stability analysis, we often wish to study something called near linear systems,
which are systems locally like a linear one with high order terms involved. Here, we have similar things. We wish to linearize the systems about the point of ``$\infty$''. Then we wish to see if the linearized systems are weakly stable (recurrent), is it true the nonlinear systems are as well.
Before proceeding further, let us recall the definition of recurrence.

\begin{defn}{\rm
The process $\{(X_t,\alpha(t)): t\geq0\}$ is said to be recurrent
if for any bounded set $\D\subset \C$ and a finite set $N\subset\N$,
we have
if $$\PP_{\phi,i}\{(X_t,\alpha(t))\in\D\times N\,\text{ for some } t\geq 0\}=1.$$
It is said to be  positive recurrent if
$$\E_{\phi,i}\tau_{D,N}<\infty \ \hbox{
 for any }\ (\phi,i)\in\C\times\N,$$
 where
 $\tau_{D,N}=\inf\{t\geq0: X_t\in D, \alpha(t)\in N\}$.
}\end{defn}

To obtain the recurrence,
we need an assumption that guarantees
the irreducibility of the process $(X_t,\alpha(t))$.
Thus, we impose the following conditions.

\begin{asp}\label{asp2.1}{\rm
The following conditions hold.
\begin{enumerate}
\item For each $i\in\N$, $H>0$, there is a positive constant $ L_{i,H}$ such that
$$|b(x,i)-b(y,i)|+|\sigma(x,i)-\sigma(y,i)|\leq  L_{i,H}|x-y|$$
if $x,y\in\R^\nz$ and $|x|,|y|\leq H$.
\item  $q_{ij}(\phi)$ is continuous in $\phi\in\C$ for each $(i,j)\in\N^2$.
\item
For any $H>0$,
      $$M_H:=\sup_{\phi\in\C, \|\phi\|\leq H, i\in\N}\{q_{i}(\phi)\}<\infty.$$
\end{enumerate}
}\end{asp}

\begin{asp}\label{asp2.2}
{\rm Suppose either of the following condition satisfied.
\begin{enumerate}
\item
 For any $i\in\N,$ $A(x,i)$
is elliptic uniformly on each compact set, that is, for any $R>0$, there is a $\theta_{R,i}>0$ such that
\begin{equation}\label{ellip}
y^\top A(x,i)
y\geq \theta_{R,i}
|y|^2
\ \ \forall |x|\leq R, \
y\in\R^\nz.
\end{equation}
Moreover, for any $i, j\in\N$, there exist $i_1,\dots,i_k\in\N$ and $\phi_1,\dots,\phi_{k+1}\in\C$ such that
$q_{ii_1}(\phi_1)>0$, $q_{i_l,i_{l+1}}(\phi_{l+1})>0,l=1,\dots, k-1$, and $q_{i_k,j}(\phi_{k+1})>0$.
\item     There exists $i^*\in\N$ such that $A(x,i^*)$
is elliptic uniformly on each compact set.
Moreover,
for any $i, j\in\N$, $\phi\in C$, there exist $i_1,\dots,i_k\in\N$ such that
$q_{ii_1}(\phi)>0$, $q_{i_l,i_{l+1}}(\phi)>0,l=1,\dots, k-1$, and $q_{i_k,j}(\phi)>0$.
\end{enumerate}
}
\end{asp}

\begin{asp}\label{asp3.1}{\rm
Suppose that for $i\in\N$, there exist $b(i),\sigma_k(i)\in\R^{\nz\times \nz}$ bounded uniformly for $i\in\N$ such that
$\hat b(x,i):=b(x,i)-b(i)x$ and $\hat\sigma(x,i):=\sigma(x,i)-(\sigma_1(i)x,\dots,\sigma_d(i)x)$
satisfying
\begin{equation}\label{e3-ex2}
\lim_{x\to\infty} \sup_{i\in\N}\left\{\dfrac{|\hat b(x,i)|\vee |\hat\sigma(x,i)|}{|x|}\right\}=0.
\end{equation}
There is $\hat Q=(\hat q_{ij})_{\N\times\N}$
such that $\hat Q$ is irreducible and
\begin{equation}\label{e1-thm3.1}
\lim_{R\to\infty}\left(\sup_{i\in\N,\phi\in\D_R} \sum_{j\ne i}|q_{ij}(\phi)-\hat q_{ij}|\right)\to 0
\end{equation}
where $\D_R=\{\phi\in\C: |\phi(t)|\geq R\,\forall\,t\in[0,R]\}$
}\end{asp}

\begin{asp}\label{asp3.2} {\rm
There exists $k_0\in\N$ and a bounded non-negative sequence
$\{\eta_k: k>k_0\}$
such that
$$
\sum_{j>k_0} q_{ij}(\phi)\eta_j\leq -1\,\text{ for any }\phi\in\C.
$$}
\end{asp}

\begin{rem}{\rm
This assumption stems from a condition for strong ergodicity
of a Markov chain having a countable state space.
We will show that, under this assumption,
$\sup_{(\phi,i)\in\C\times\N}\E_{\phi,i}\zeta<\infty$
where $\zeta=\inf\{t\geq0: \alpha(t)\leq k_0\}.$
}\end{rem}

\begin{rem}{\rm
Let us briefly comment on the conditions.
\begin{itemize}

\item Assumption \ref{asp2.2} consists of two parts. The first part is a condition on the ellipticity, whereas the second is to ensure the irreducibility of the switching process. One view this condition as in continuous-time Markov chain ensuring certain positive probabilities. However, because of the past dependence, these quantities now depend on $\phi$.

\item Assumption \ref{asp3.1} essentially indicates that the drifts and diffusion matrices can be linearized in the neighborhood of infinity. The $Q(\phi)$ can also be approximated in a certain sense by a generator of a Markov chain.

\item Because the switching takes values in $\N$, its state may not be bounded,
Assumption \ref{asp3.2} requires that the switching does not act too wildly. In fact, it is ``pushed in'' in the sense that Assumption \ref{asp3.2} is satisfied.

\item It can be easily seen that under Assumption \ref{asp3.1}, there exists $\tilde L>0$ such that
$$|b(x,i)|+|\sigma(x,i)|\leq \tilde L(|x|+1)\,\text{ for any }\, (x,i)\in\R^\nz\times\N.$$
Thus, Assumptions \eqref{asp2.1} and \eqref{asp3.1} are sufficient
for the existence and uniqueness of solutions $(X(t),\alpha(t))$ to \eqref{e2.3}
due to \cite[Theom 3.5]{DY1}. Moreover, the solution process has the Markov-Feller property
according to \cite[Theorems 4.8]{DY1}.
\end{itemize}
}
\end{rem}

Throughout this paper,
we assume that
Assumptions \ref{asp2.1}, \ref{asp2.2}, \ref{asp3.1}, and \ref{asp3.2}
are satisfied.
Then, we have some auxiliary results whose proofs are relegate to the appendix.

\begin{lm}\label{lm3.1}
There is a $K_1>0$ such that
$$\E_{\phi,i} \sup_{t\in[0,T]}\{|X(t)|^2\}\leq K_1(1+|\phi(0)|)e^{K_1T},$$
for any $T>0$, $(\phi,i)\in\C\times\N$.
\end{lm}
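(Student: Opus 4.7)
The plan is to derive the bound by standard Itô/BDG/Gronwall arguments, using as the crucial input the uniform linear growth of the coefficients stated in the remark preceding the lemma: under Assumption \ref{asp3.1} there is $\tilde L>0$ with $|b(x,i)|+|\sigma(x,i)|\le \tilde L(|x|+1)$ for all $(x,i)\in\R^{\nz}\times\N$. Because this bound does not depend on $i$, the discrete component $\alpha(t)$ will essentially drop out of the estimate, which is what allows the argument to go through despite $\alpha$ taking values in the countably infinite set $\N$.

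First I would localize. Define $\tau_R=\inf\{t\ge 0:|X(t)|\ge R\}$, which is an $\F_t$-stopping time; by the linear-growth bound one shows $\tau_R\to\infty$ a.s. as $R\to\infty$. Working on $[0,T\wedge\tau_R]$, I would write
\[
X(t)=\phi(0)+\int_0^t b(X(s),\alpha(s))\,ds+\int_0^t\sigma(X(s),\alpha(s))\,dW(s)
\]
and use the inequality $|a+b+c|^2\le 3(|a|^2+|b|^2+|c|^2)$ together with the Cauchy--Schwarz inequality on the drift integral to get, for $t\le T$,
\[
\sup_{s\le t\wedge\tau_R}|X(s)|^2\le 3|\phi(0)|^2+3T\int_0^{t\wedge\tau_R}|b(X(s),\alpha(s))|^2\,ds+3\sup_{s\le t\wedge\tau_R}\Big|\int_0^s\sigma(X(u),\alpha(u))\,dW(u)\Big|^2.
\]
Taking expectations and applying the Burkholder--Davis--Gundy inequality to the stochastic integral yields a constant $C_T$ (linear in $T$) such that
\[
\E_{\phi,i}\sup_{s\le t\wedge\tau_R}|X(s)|^2\le 3|\phi(0)|^2+C_T\,\E_{\phi,i}\int_0^{t\wedge\tau_R}\big(|b(X(s),\alpha(s))|^2+|\sigma(X(s),\alpha(s))|^2\big)\,ds.
\]

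Next I would plug in the uniform linear growth bound $|b|^2+|\sigma|^2\le 2\tilde L^2(|x|+1)^2\le 4\tilde L^2(|x|^2+1)$ to obtain
\[
\E_{\phi,i}\sup_{s\le t\wedge\tau_R}|X(s)|^2\le 3|\phi(0)|^2+C'_T+C'_T\int_0^t\E_{\phi,i}\sup_{u\le s\wedge\tau_R}|X(u)|^2\,ds,
\]
for some $C'_T$ that is affine in $T$. At this point, since the integrand on the left is finite thanks to the localization, Gronwall's inequality gives
\[
\E_{\phi,i}\sup_{s\le T\wedge\tau_R}|X(s)|^2\le (3|\phi(0)|^2+C'_T)\exp(C'_T T).
\]
Letting $R\to\infty$ and invoking Fatou's lemma on the left, then absorbing the polynomial prefactor into a sufficiently large exponential to match the stated form $K_1(1+|\phi(0)|)e^{K_1 T}$ (or $K_1(1+|\phi(0)|^2)e^{K_1 T}$ as the natural formulation suggests), yields the claim with $K_1$ independent of $(\phi,i)$.

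The only real technical point is the localization step: a priori one does not know that $\E\sup_{s\le T}|X(s)|^2$ is finite, so the stochastic integral need not be a true martingale and one cannot apply BDG directly without stopping. Introducing $\tau_R$ and then passing to the limit handles this cleanly. The uniformity in the initial mode $i\in\N$ is automatic since the growth constant $\tilde L$ from the remark after Assumption \ref{asp3.1} is independent of $i$, so no separate estimate on $\alpha(t)$ is needed for this lemma.
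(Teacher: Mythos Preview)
Your proposal is correct and is precisely the ``standard argument'' the paper invokes: the paper's own proof simply notes the uniform linear growth $|b(x,i)|+|\sigma(x,i)|\le K(1+|x|)$ and then defers to references (Mao, Zhu--Yin) for the localization/BDG/Gronwall computation you have written out in full. One small remark: your argument naturally produces $K_1(1+|\phi(0)|^2)e^{K_1T}$, not $K_1(1+|\phi(0)|)e^{K_1T}$, and you cannot ``absorb'' the square into the stated form uniformly in $\phi(0)$; this appears to be a typo in the lemma's statement rather than a flaw in your proof, and indeed the paper only ever applies the bound with $|\phi(0)|$ in a fixed bounded set.
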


\begin{lm}\label{lm3.2}
For any $K_2>0$,  $\eps>0$, and $T>0$,
there exists $K_3>0$ such that
$$\E_{\phi,i}\{X(t)\geq K_2\, \text{ for }\, t\in[0,T]\}>1-\eps$$
given that $\phi(0)\geq K_3$.
\end{lm}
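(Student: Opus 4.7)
The plan is to apply Itô's formula to $V(x)=\log|x|$. Since the bullet following Assumption \ref{asp3.1} gives $|b(x,i)|+|\sigma(x,i)|\leq \tilde L(|x|+1)$ uniformly in $(x,i)\in\R^\nz\times\N$, the drift and diffusion densities of $V(X(t))$ are bounded uniformly in $i$ whenever $|X(t)|\geq K_2$; this lets me control the first time $|X|$ dips below $K_2$. I interpret the statement as $|\phi(0)|\geq K_3$ and $|X(t)|\geq K_2$, since $X$ takes values in $\R^\nz$.

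I would introduce $\tau:=\inf\{t\geq 0:|X(t)|\leq K_2\}$ and reduce the claim to $\PP_{\phi,i}\{\tau\leq T\}<\eps$. On $\{|x|\geq K_2\}$, for each fixed $i\in\N$,
$$\mathcal{L}_i\log|x|=\frac{x^\top b(x,i)}{|x|^2}+\frac{1}{2}\frac{\trace(A(x,i))}{|x|^2}-\frac{x^\top A(x,i)x}{|x|^4},$$
and from the uniform linear-growth bound each summand is dominated in absolute value by some constant $M=M(K_2,\tilde L)$ independent of $i$. The quadratic variation density likewise satisfies $\bigl|\sigma^\top(x,i)\nabla\log|x|\bigr|^2=x^\top A(x,i)x/|x|^4\leq C$ for some $C=C(K_2,\tilde L)$. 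Because $X(\cdot)$ has continuous paths (the switching of $\alpha$ only changes the coefficients, not $X$ itself), Itô's formula gives
$$\log|X(t\wedge\tau)|=\log|\phi(0)|+\int_0^{t\wedge\tau}\mathcal{L}_{\alpha(s)}\log|X(s)|\,ds+M_{t\wedge\tau},$$
where $M_\cdot$ is a continuous local martingale with $\langle M\rangle_{T\wedge\tau}\leq CT$.

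On the event $\{\tau\leq T\}$ we have $\log|X(\tau)|=\log K_2$, so $M_\tau\leq \log K_2-\log|\phi(0)|+MT$. Picking $K_3$ so large that $a:=\log K_3-\log K_2-MT>0$ and $CT/a^2<\eps$, Doob's $L^2$-inequality yields
$$\PP_{\phi,i}\{\tau\leq T\}\leq \PP\Bigl\{\inf_{t\leq T}M_{t\wedge\tau}\leq-a\Bigr\}\leq \frac{\E\langle M\rangle_{T\wedge\tau}}{a^2}\leq\frac{CT}{a^2}<\eps.$$

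The main obstacle is verifying the uniform-in-$i$ linear growth of $b$ and $\sigma$, which is precisely the observation extracted from Assumption \ref{asp3.1} in the bullet preceding the lemma. Beyond that, this is a standard martingale exit-time estimate, whose conceptual content is that $\log|X|$ (rather than $|X|$ itself) has drift and diffusion of order $1$ near infinity, so a large initial $\log|\phi(0)|$ cannot be eroded down to $\log K_2$ in bounded time $T$ except with small probability.
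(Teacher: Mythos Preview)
Your proof is correct and follows essentially the same route as the paper's: apply It\^o's formula to $\log|x|$, use the uniform linear-growth bound to control $\mathcal{L}_i\log|x|$ and the diffusion coefficient on $\{|x|\geq K_2\}$, and finish with a maximal inequality. The only cosmetic difference is that the paper bounds $\E\sup_{t\leq T}\bigl|\int_0^{t\wedge\tau}[\mathcal{L}\log|X|+\wdt\sigma\,dW]\bigr|$ directly and applies Markov's inequality, whereas you separate the drift (deterministically $\leq MT$) from the martingale and apply Doob's $L^2$ maximal inequality to the latter; the substance is the same.
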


\begin{lm}\label{lm3.3}
$$\sup_{(\phi,i)\in\C\times\N}\E_{\phi,i}\zeta<\infty$$
where $\zeta=\inf\{t\geq0: \alpha(t)\leq k_0\}.$
Moreover,
$\hat\alpha(t)$ is a strong ergodic process, that is,
$$\lim_{t\to\infty}\sup_{i\in\N}\bigg\{\sum_{j\in\N}|\hat p_{ij}(t)-\nu_j|\bigg\}=0.$$
where $\hat p_{ij}(t), i,j\in\N$ is the transition probability of $\hat\alpha(t)$
and $\bnu=(\nu_1,\nu_2,\dots)$ is an invariant probability measure of $\hat\alpha(t)$.
\end{lm}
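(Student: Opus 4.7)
\textbf{Proof plan for Lemma \ref{lm3.3}.}

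\emph{Uniform bound on $\E_{\phi,i}\zeta$.}
The plan is to extend the Lyapunov sequence $\{\eta_k: k > k_0\}$ of Assumption \ref{asp3.2} to a bounded function $V:\N\to[0,\infty)$ by setting $V(k) := \eta_k$ for $k > k_0$ and $V(k) := 0$ for $k \leq k_0$, and to write $\bar\eta := \sup_k V(k) < \infty$. Because $V$ vanishes on $\{1,\ldots,k_0\}$, for every $i > k_0$ and every $\phi \in \C$,
\begin{equation*}
\sum_j q_{ij}(\phi) V(j) \;=\; \sum_{j > k_0} q_{ij}(\phi) \eta_j \;\leq\; -1
\end{equation*}
by Assumption \ref{asp3.2}. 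Since $V$ depends on the discrete component only, the compensator formula for the pure-jump component $\alpha(\cdot)$ in \eqref{e2.3} shows that
\begin{equation*}
V(\alpha(t)) - V(\alpha(0)) - \int_0^t \sum_j q_{\alpha(s),j}(X_s)\, V(j)\, ds
\end{equation*}
is a local martingale. Applying optional stopping at the bounded time $t \wedge \zeta$, taking expectations, and using that the integrand is bounded above by $-1$ on $\{s < \zeta\}$, one would obtain $\E_{\phi,i}[t \wedge \zeta] \leq V(i) - \E_{\phi,i} V(\alpha(t \wedge \zeta)) \leq \bar\eta$ for every $t > 0$ and every $i > k_0$. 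Monotone convergence then gives $\E_{\phi,i}\zeta \leq \bar\eta$, and for $i \leq k_0$ one trivially has $\zeta = 0$, yielding the uniform bound.

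\emph{Strong ergodicity of $\hat\alpha$.}
The next step is to transfer the drift condition from $Q(\phi)$ to $\hat Q$. Using $\sum_{j} q_{ij}(\phi) = 0 = \sum_{j} \hat q_{ij}$ to estimate the diagonal discrepancy $|q_{ii}(\phi) - \hat q_{ii}|$ by the tail $\sum_{j \ne i}|q_{ij}(\phi) - \hat q_{ij}|$, a short calculation yields
\begin{equation*}
\bigg|\sum_{j > k_0}\bigl(q_{ij}(\phi) - \hat q_{ij}\bigr)\eta_j\bigg| \;\leq\; 2\bar\eta \sum_{j \ne i}|q_{ij}(\phi) - \hat q_{ij}|,
\end{equation*}
whose right-hand side tends to $0$ uniformly in $i$ as $R \to \infty$ with $\phi \in \D_R$, by Assumption \ref{asp3.1}. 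Passing to this limit in Assumption \ref{asp3.2} gives the generator-level drift $\sum_{j > k_0}\hat q_{ij}\eta_j \leq -1$ for every $i > k_0$. Repeating the Dynkin argument above for the (genuine) Markov chain $\hat\alpha$ shows that its hitting time $\hat\zeta$ of the finite set $F := \{1, \ldots, k_0\}$ satisfies $\sup_{i\in\N} \E_i \hat\zeta \leq \bar\eta$. Combined with the irreducibility of $\hat Q$ postulated in Assumption \ref{asp3.1}, the classical uniform (strong) ergodicity criterion for countable-state continuous-time Markov chains (a bounded Lyapunov drift into a finite set, cf.\ Hou--Tweedie or Anderson's monograph on continuous-time Markov chains) produces a unique invariant probability $\bnu$ and the required convergence $\lim_{t\to\infty} \sup_i \sum_j |\hat p_{ij}(t) - \nu_j| = 0$.

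\emph{Expected main obstacle.} The first part is a routine Lyapunov--Dynkin calculation. The delicate point is the transfer of the path-dependent drift condition on $Q(\phi)$ to the constant generator $\hat Q$: it requires $\ell^1$-type control of $\sum_{j \ne i}|q_{ij}(\phi) - \hat q_{ij}|$ that is uniform in $i$, exactly what Assumption \ref{asp3.1} supplies, together with an interchange of limit and infinite sum that is legitimate because $\eta$ is bounded. Once this step is in place, quoting the standard uniform ergodicity criterion for countable-state CTMCs is a purely classical matter.
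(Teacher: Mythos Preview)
Your proposal is correct and follows essentially the same route as the paper: extend $\eta$ by zero on $\{1,\dots,k_0\}$, apply Dynkin's formula to $\eta(\alpha(t\wedge\zeta))$ to get $\E_{\phi,i}\zeta\le\sup_k\eta_k$, then transfer the drift inequality to $\hat Q$ via Assumption~\ref{asp3.1} and invoke the standard strong-ergodicity criterion from Anderson's monograph. Your explicit $\ell^1$ estimate for the transfer step is in fact more detailed than the paper's version, which simply asserts the inequality for $\hat Q$ as a consequence of Assumptions~\ref{asp3.1} and~\ref{asp3.2}.
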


\begin{lm}\label{lm3.4}
For any $T>0$ and a bounded function $f:\N\mapsto\R$, we have
\begin{equation}\label{e3-a2}
\lim_{R\to\infty} \sup_{\phi\in\D_R,i\in\N,t\in[0,T]}\Big\{\Big|\E_{\phi,i}f(\alpha(t))-\E_{i}f(\hat\alpha(t)\Big|\Big\}=0.
\end{equation}

\end{lm}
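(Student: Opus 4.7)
The plan is a coupling argument. On a common probability space I build both $\alpha$ and a Markov chain $\hat\alpha$ with generator $\hat Q$, initialized so that $\alpha(0)=\hat\alpha(0)=i$, and estimate the probability that they decouple by time $T$. Since $f$ is bounded, say $|f|\leq M$,
\begin{equation*}
\sup_{t\in[0,T]}\bigl|\E_{\phi,i}f(\alpha(t))-\E_i f(\hat\alpha(t))\bigr|\leq 2M\,\PP\!\left(\exists\, s\in[0,T]:\alpha(s)\neq\hat\alpha(s)\right),
\end{equation*}
so the task reduces to showing the right-hand side tends to $0$ as $R\to\infty$, uniformly in $\phi\in\D_R$ and $i\in\N$.

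I realize both processes through a single Poisson random measure $\p(dt,dz)$ of unit intensity on $\R_+\times\R_+$ with a maximal coupling: whenever $\alpha(s-)=\hat\alpha(s-)=k$, partition $[0,\infty)$ into three families of intervals indexed by $j\ne k$, of respective lengths $q_{kj}(X_s)\wedge\hat q_{kj}$, $(q_{kj}(X_s)-\hat q_{kj})^+$, and $(\hat q_{kj}-q_{kj}(X_s))^+$, governing respectively a simultaneous jump of both chains to $j$, a jump of only $\alpha$ to $j$, and a jump of only $\hat\alpha$ to $j$. The total length used is at most $q_k(X_s)+\hat q_k<\infty$ uniformly in $k$, by Assumption~\ref{asp2.1}(3) combined with the fact that \eqref{e1-thm3.1} forces $\sup_k\hat q_k<\infty$. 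With this coupling, whenever the chains still share a state $k$, the rate of generating a first disagreement is $\Psi_k(X_s):=\sum_{j\ne k}|q_{kj}(X_s)-\hat q_{kj}|$, and a standard compensator estimate gives
\begin{equation*}
\PP(\sigma\leq T)\leq \E\!\int_0^{T}\Psi_{\alpha(s)}(X_s)\,ds,\qquad \sigma:=\inf\{s\geq 0:\alpha(s)\neq\hat\alpha(s)\}.
\end{equation*}

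To close the argument, set $\tau_R:=\inf\{s\in[0,T]:X_s\notin\D_{R/2}\}$ with $\inf\emptyset=+\infty$. On $\{\tau_R\geq T\}$ one has $X_s\in\D_{R/2}$ throughout $[0,T]$, so $\Psi_{\alpha(s)}(X_s)\leq\eps_{R/2}:=\sup_{k\in\N,\,\psi\in\D_{R/2}}\sum_{j\ne k}|q_{kj}(\psi)-\hat q_{kj}|$, and Assumption~\ref{asp3.1} forces $\eps_{R/2}\to 0$ as $R\to\infty$. Since $\phi\in\D_R$ entails $|\phi(0)|\geq R$, Lemma~\ref{lm3.2} applied with $K_2=R/2$ gives $\PP(\tau_R<T)\to 0$ as $R\to\infty$, uniformly in $i\in\N$ and in $\phi\in\D_R$. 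Splitting on $\{\tau_R<T\}$ versus its complement yields $\PP(\sigma\leq T)\leq \PP(\tau_R<T)+T\,\eps_{R/2}\to 0$, which is what we need. The principal technical point is the measurable construction of the coupling intervals jointly in $(s,k,X_s)$ for countably many $j$; this is a routine Poisson-representation exercise once the uniform finiteness $\sup_k(q_k(\phi)+\hat q_k)<\infty$ is in hand, but it must be executed carefully because the intervals depend simultaneously on the current state $k$ and on the past segment $X_s$.
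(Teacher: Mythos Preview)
Your coupling strategy is exactly the paper's: build $(\alpha,\hat\alpha)$ via the basic (maximal) coupling so that, while they agree at state $k$, the instantaneous separation rate is $\Psi_k(X_s)=\sum_{j\ne k}|q_{kj}(X_s)-\hat q_{kj}|$, and then bound the decoupling probability by the time integral of $\Psi$. The paper writes this through the coupled generator $\wdt Q$ and It\^o's formula applied to the indicator of the diagonal rather than through a Poisson-measure compensator, but the content is the same and both routes land on the same quantity $\Psi_k$.

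The gap is in how you close the estimate. You set the exit threshold for $X$ at $R/2$ and assert that Lemma~\ref{lm3.2} with $K_2=R/2$ gives $\PP_{\phi,i}(\tau_R<T)\to0$ uniformly for $\phi\in\D_R$. It does not: the proof of Lemma~\ref{lm3.2} shows that one may only take $K_3$ of order $K_2\,e^{K_4/\eps}$ for a constant $K_4>0$ independent of $\eps$, so requiring $K_3(R/2,\eps)\le R$ forces $e^{K_4/\eps}\le 2$, i.e.\ $\eps$ bounded away from $0$. Concretely, in the linearized regime $|X(t)|\approx |X(0)|\exp\big(\int_0^t\mu\,ds+\int_0^t\sigma\,dW\big)$, and the probability that this drops below $|X(0)|/2$ on $[0,T]$ is a fixed positive number independent of $|X(0)|$; hence $\PP(\tau_R<T)\not\to0$. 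The remedy, which is precisely what the paper does, is to decouple the two thresholds: given $\eps>0$, first pick a \emph{fixed} level $H$ (using \eqref{e1-thm3.1}) so that $\sup_{k\in\N,\,\psi\in\D_H}\Psi_k(\psi)\le\eps/(2T)$, and only then invoke Lemma~\ref{lm3.2} with that fixed $K_2=H$ to obtain $R_0$ with $\PP_{\phi,i}\{\tau_H<T\}\le\eps/2$ whenever $|\phi(0)|\ge R_0$. Splitting on $\{\tau_H<T\}$ then yields $\PP(\sigma\le T)\le\eps$ for all $\phi\in\D_R$, $R\ge R_0$, which is the uniform statement you need.
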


\begin{lm}\label{lm-a3}
Let $Y$ be a random variable, $\theta_0>0$ a constant, and suppose $$\E \exp(\theta_0 Y)+\E \exp(-\theta_0 Y)\leq K_1.$$
Then the log-Laplace transform
$\phi(\theta)=\ln\E\exp(\theta Y)$
is twice differentiable on $[0,0.5\theta_0]$ and
$$\dfrac{d\phi}{d\theta}(0)= \E Y,\quad\text{ and }\,0\leq \dfrac{d^2\phi}{d\theta^2}(\theta)\leq K_2\,, \theta\in[0,0.5\theta_0]$$
 for some $K_2>0$.
As a result of Taylor's expansion,
we have
$$
\phi(\theta)\leq\theta\E Y +\theta^2 K_2,\,\,\text{ for }\theta\in[0,0.5\theta_0).
$$
\end{lm}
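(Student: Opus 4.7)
The plan is to treat this as a standard cumulant-generating-function computation on the compact interval $[0, 0.5\theta_0]$, where the two-sided exponential moment hypothesis guarantees that all the relevant moments under exponential tilting stay finite. Writing $M(\theta) = \E\exp(\theta Y)$, so that $\phi = \ln M$, my first step is to show $M$ is twice continuously differentiable on $[0, 0.5\theta_0]$ with $M'(\theta) = \E[Y\exp(\theta Y)]$ and $M''(\theta) = \E[Y^2 \exp(\theta Y)]$. The justification for differentiation under the expectation comes from producing a dominating function: using the elementary inequality $|y|^k \le C_k \exp(\theta_0|y|/4)$ for $k \in \{1, 2\}$ and then splitting according to the sign of $Y$, one obtains $|Y|^k\exp(\theta Y) \le C\bigl(\exp(\theta_0 Y) + \exp(-\theta_0 Y)\bigr)$ uniformly for $\theta \in [0, 0.5\theta_0]$, which is $\PP$-integrable by hypothesis with integral at most $CK_1$.

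The second step is to differentiate $\phi = \ln M$, obtaining
\[
\phi'(\theta) = \frac{M'(\theta)}{M(\theta)}, \qquad
\phi''(\theta) = \frac{M''(\theta)M(\theta) - M'(\theta)^2}{M(\theta)^2},
\]
and to identify $\phi''(\theta)$ with the variance of $Y$ under the tilted probability $\exp(\theta Y)\,d\PP/M(\theta)$, which is automatically nonnegative. At $\theta = 0$ we have $M(0) = 1$ and $M'(0) = \E Y$, so $\phi'(0) = \E Y$. For the upper bound on $\phi''$, I would combine the uniform estimate on $M''$ from the first step with a lower bound $M(\theta) \ge \exp(\theta \E Y) \ge c_0 > 0$ supplied by Jensen's inequality (we already know $\E|Y| < \infty$ from the first step), which gives $\phi''(\theta) \le M''(\theta)/M(\theta) \le K_2$ on the whole interval.

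The final step is Taylor's theorem applied at $\theta = 0$ with Lagrange remainder: $\phi(\theta) = \theta\E Y + \tfrac{1}{2} \theta^2 \phi''(\xi)$ for some $\xi \in (0, \theta)$, which yields the stated inequality $\phi(\theta) \le \theta \E Y + \theta^2 K_2$ after absorbing the factor $1/2$ into $K_2$. No conceptual obstacle is anticipated; the main place to be careful is the choice of the auxiliary exponent, where $\theta_0/4$ is picked so that both $\theta + \theta_0/4 \le \theta_0$ on $\{Y \ge 0\}$ and $-\theta + \theta_0/4 \le \theta_0$ on $\{Y < 0\}$ hold uniformly over $\theta \in [0, 0.5\theta_0]$, ensuring the dominating function is integrable and independent of $\theta$.
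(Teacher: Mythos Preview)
Your proposal is correct and complete: the domination argument via $|y|^k \le C_k e^{\theta_0|y|/4}$ cleanly justifies differentiation under the expectation, the identification of $\phi''$ as a tilted variance gives nonnegativity, and the combination of the uniform bound on $M''$ with Jensen's lower bound $M(\theta)\ge e^{\theta\E Y}$ yields the uniform upper bound $K_2$ (indeed, since $e^{\theta_0|\E Y|}\le \E e^{\theta_0|Y|}\le K_1$, your $c_0$ and hence $K_2$ depend only on $K_1$ and $\theta_0$, which is what the application in the paper actually needs). The paper itself omits the proof entirely, stating only that ``this lemma concerns basic properties of the Laplace transform'' and that ``it can be proved by straightforward arguments in calculus''; your write-up supplies exactly those straightforward arguments.
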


Now, we state the main result of the paper.
For a matrix $A\in\R^{\nz\times \nz}$,
denote by $\Lambda_A$ and $\lambda_A$,
the maximum and minimum eigenvalues of $\frac{A+A^\top}2$
respectively.
Let $\rho_A=\inf_{|x|=1}\{|x^\top A x|\}$.

\begin{thm}\label{thm:main}
Let Assumptions \ref{asp2.1}, \ref{asp2.2}, \ref{asp3.1} and \ref{asp3.2}
be satisfied.
Assume further that
$$\sum_{i\in\N}\nu_i\left(\Lambda_{b(i)}+\dfrac{\Lambda_{a(i)}}2-\sum_{j=1}^d\rho_{\sigma_j(i)}^2\right)<0$$
where $a(i)=\sum_{j=1}^d \sigma_j^\top(i)\sigma_j$.
Then the process $(X_t,\alpha(t))$ is positive recurrent.
Moreover, the process has a unique invariant probability measure $\mu^*$  such that for any $(\phi, i)\in\C\times\N$
\begin{equation}\label{e0-thm2.4}
\lim_{t\to\infty}\|P(t,(\phi,i),\cdot)-\mu^*\|_{TV}=0.
\end{equation}
where $P(t,(\phi,i),\cdot)$ is the transition probability of
$(X_t,\alpha(t))$ and $\|\cdot\|_{TV}$ is the total variation norm.
\end{thm}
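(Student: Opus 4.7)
The plan is a Foster--Lyapunov (Khasminskii) argument for the segment process $(X_t,\alpha(t))$, with a Lyapunov function of the form
$$V(\phi,i):=\bigl(1+|\phi(0)|^2\bigr)^{\theta/2}\bigl(1+\theta f(i)\bigr)$$
for a small $\theta>0$ and a bounded function $f:\N\to\R$ chosen so that the state-dependent coefficient $g(i):=\Lambda_{b(i)}+\tfrac12\Lambda_{a(i)}-\sum_{j=1}^d\rho_{\sigma_j(i)}^2$ is effectively replaced by its stationary average $\bar g:=\sum_i\nu_ig(i)<0$. Since $g$ is bounded (Assumption \ref{asp3.1}) and $\hat\alpha$ is strongly ergodic (Lemma \ref{lm3.3}), the Poisson equation $\sum_j\hat q_{ij}f(j)=\bar g-g(i)$ admits the bounded solution $f(i)=\int_0^\infty\bigl(\E_i g(\hat\alpha(t))-\bar g\bigr)\,dt$.

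\textbf{Generator estimate.} Because $V$ depends on the segment only through $\phi(0)$, the full generator acting on $V$ reads
$$\mathcal LV(\phi,i)=\mathcal L^{\text{diff}}_iV(\phi(0),i)+\sum_{j\neq i}q_{ij}(\phi)\bigl[V(\phi(0),j)-V(\phi(0),i)\bigr].$$
A direct computation using $x^\top b(i)x\le\Lambda_{b(i)}|x|^2$, $\operatorname{tr}(a(i)xx^\top)\le\Lambda_{a(i)}|x|^2$ and $x^\top\sigma_j(i)\sigma_j^\top(i)x\ge\rho_{\sigma_j(i)}^2|x|^2$, then absorbing the linearization errors $\hat b,\hat\sigma$ via \eqref{e3-ex2} as $o(|x|^\theta)$, yields
$$\mathcal L^{\text{diff}}_iV(\phi(0),i)\le \theta\bigl(1+|\phi(0)|^2\bigr)^{\theta/2}\bigl[(1+\theta f(i))g(i)+o_R(1)\bigr]$$
uniformly in $i$ for $|\phi(0)|\ge R$. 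For the switching term, Assumption \ref{asp3.1} (in particular \eqref{e1-thm3.1}) gives, for $\phi\in\D_R$ with $R\to\infty$,
$$\sum_{j\neq i}q_{ij}(\phi)\bigl[V(\phi(0),j)-V(\phi(0),i)\bigr]=\theta\bigl(1+|\phi(0)|^2\bigr)^{\theta/2}\bigl[\bar g-g(i)+o_R(1)\bigr].$$
Adding these and choosing $\theta$ small enough that the $O(\theta^2)$ cross-term $\theta^2 g(i)f(i)$ is dominated by $|\bar g|/4$, I obtain
$$\mathcal LV(\phi,i)\le -\tfrac12\theta|\bar g|\bigl(1+|\phi(0)|^2\bigr)^{\theta/2}\qquad\text{for }\phi\in\D_R,\ i\in\N,\ R\text{ large.}$$

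\textbf{From the inequality to positive recurrence.} The preceding bound requires the whole segment $\phi$ to lie in $\D_R$, not merely $|\phi(0)|$ to be large. The gap is bridged by Lemma \ref{lm3.2}: once $|X(t)|$ reaches a threshold $R'\gg R$, with high probability $|X(s)|\ge R$ on $[t,t+r]$, so after $r$ time units the segment falls into $\D_R$ and the Foster inequality is in force. A standard stopping-time decomposition, combined with Lemma \ref{lm3.1} for $L^2$ control during the transient window, yields $\E_{\phi,i}\tau_{\widetilde K}<\infty$ where $\widetilde K=\{\psi:\|\psi\|\le R_0\}\times\{i\le k_0\}$; the $\alpha$-component is handled by Assumption \ref{asp3.2} and Lemma \ref{lm3.3}, which bound the expected time for $\alpha$ to return below $k_0$ regardless of the current segment. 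This gives the asserted positive recurrence.

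\textbf{Uniqueness and total-variation convergence.} Existence of an invariant probability $\mu^*$ follows from positive recurrence and the Markov-Feller property \cite[Thm.~4.8]{DY1} via Krylov--Bogolyubov for the segment chain. For uniqueness and \eqref{e0-thm2.4}, the plan is to verify a minorization (small set) condition on bounded subsets of $\C\times\N$: the uniform ellipticity in Assumption \ref{asp2.2} produces a positive density for the continuous component after time $>r$, while the irreducibility clause of Assumption \ref{asp2.2} lets the switching reach a designated reference state with positive probability; this, together with the Foster bound, is the input to a Meyn--Tweedie/Harris theorem that delivers geometric (and in particular total-variation) convergence. The main obstacle throughout is the non-Markovian character of the switching rates, which depend on the full history $X_t$: Assumption \ref{asp3.1} only controls $q_{ij}(\phi)$ when $\phi$ is uniformly large, so the step that leverages Lemma \ref{lm3.2} to guarantee segments stay in $\D_R$ is essential, and all the $o_R(1)$ remainders in the switching sum must be controlled uniformly in $i$, which is precisely the content of \eqref{e1-thm3.1}.
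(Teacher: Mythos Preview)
Your approach is genuinely different from the paper's, and the contrast is instructive. The paper does \emph{not} build a Lyapunov function on $\C\times\N$; instead it works directly with $\ln|X(t)|$, writes $\ln|X(t\wedge\tau_H)|\le\ln|\phi(0)|+G(t)$ with $G(t)=\int_0^{t\wedge\tau_H}c(\alpha(s))\,ds+\text{martingale}$, obtains a two-sided exponential-moment bound $\E e^{\theta G(t)}\le e^{Ct}$ for $|\theta|\le 1$, then uses the coupling Lemma~\ref{lm3.4} plus strong ergodicity of $\hat\alpha$ to get $\E_{\phi,i}G(T)\le -c\lambda T$ for large $|\phi(0)|$, and finally invokes the log-Laplace expansion (Lemma~\ref{lm-a3}) to conclude $\E_{\phi,i}|X(T\wedge\tau_H)|^\theta\le\rho\,|\phi(0)|^\theta$. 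Iterating this one-step contraction bounds $\E\tau_{H_2}$, and a renewal argument with Lemma~\ref{lm3.3} handles the $\alpha$-component. In particular, the paper \emph{explicitly remarks} that it avoids the Fredholm-alternative route of \cite{ZY2} because the state space is countable; your proposal is precisely that route, resurrected via the observation that strong ergodicity of $\hat\alpha$ (Lemma~\ref{lm3.3}) upgrades to uniform exponential ergodicity by a semigroup argument, whence $f(i)=\int_0^\infty(\E_ig(\hat\alpha(t))-\bar g)\,dt$ is bounded and solves $\hat Q f=\bar g-g$. That is a valid and arguably more transparent mechanism for the averaging; what it buys you is a pointwise drift inequality, while the paper's log-Laplace method buys a contraction in one time step without ever constructing $f$.

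Where your sketch is thin is exactly where the paper also has to work: the drift $\mathcal LV\le -c\theta V$ holds only on $\D_R$, and $\{\phi:\phi\notin\D_R\}$ is \emph{not} a small set---it contains segments with $|\phi(0)|$ arbitrarily large. Your appeal to Lemma~\ref{lm3.2} is the right idea (and the paper uses the same lemma for the same purpose, cf.\ the conditioning on $\{|X(t)|\ge H_1,\,t\in[0,r]\}$ around \eqref{e9-thm3.2}), but the phrase ``a standard stopping-time decomposition'' hides real work: you must control $\E V(X_r,\alpha(r))$ on the exceptional event of probability $\le\eps$, and then run Dynkin only on $[r,T\wedge\tau_R]$ where the segment is guaranteed to sit in $\D_R$. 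The paper's one-step contraction $\E|X(T\wedge\tau_H)|^\theta\le\rho|\phi(0)|^\theta$ is exactly what your Lyapunov argument would produce once these details are filled in, so the two proofs converge at that point and the remaining renewal argument is identical. You should also make explicit the passage from strong ergodicity to exponential ergodicity (pick $t_0$ with $\sup_i\|P_{t_0}(i,\cdot)-\nu\|_{TV}<1$ and iterate), since boundedness of $f$ hinges on it.
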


\begin{rem}{\rm
This result is similar to \cite[Theorem 5.1]{ZY2},
which is based on the Fredholm alternative.
However, in our setting, $\alpha(t)$ takes value in a countable state space.
The Fredholm alternative is not applicable.
Thus, we need different approach to obtain the desired result.
}\end{rem}

\begin{proof}  [Proof of Theorem {\rm\ref{thm:main}}]
Let $\eps_0>0$ sufficiently small that
$$-\lambda:=\sum_{i\in\N}\nu_i\left(\eps_0+\Lambda_{b(i)}+\dfrac{\Lambda_{a(i)}}2-\sum_{j=1}^d\rho_{\sigma_j(i)}^2\right)<0$$
Denote by $\LL_i$ the generator of the diffusion when the discrete component is in state $i$, that is,
$$\LL_iV(x)=\nabla V(x)b(x,i)+\dfrac12\trace\Big(\nabla^2 V(x)A(x,i)\Big).$$
where $\nabla f(x,i)=(f_1(x,i),\dots,f_n(x,i))\in \rr^{1\times \nz}$
and $\nabla^2 f(x,i)=(f_{ij}(x,i))_{\nz\times \nz}$ are the gradient and Hessian of $f(x,i)$ with respect to $x$, respectively,
 with \bea \ad f_k(x,i) =(\partial /\partial x_k) f(x,i),\
f_{kl} (x, i) = (\partial^2/\partial x_k \partial x_l) f(x,i),\
\hbox{ and }\\
\ad A(x,i)=(a_{ij}(x,i))_{\nz\times \nz}=\sigma(x,i)\sigma^\top(x,i),\eea
where $z^\top$ denotes the transpose of $z$.
By direct computation and \eqref{e3-ex2}, we have
\begin{equation}\label{e1-thm3.1}
\begin{aligned}
\LL_i \ln|x|=&
\dfrac {x^\top b(x,i)}{|x|^2}+\dfrac12\dfrac{\trace\big(\sigma^\top(x,i)(i)\sigma(x,i)\big)}{|x|^2}-\dfrac{\trace\big(\sigma^\top(x,i)xx^\top\sigma(x,i)\big)}{|x|^4}\\
=&\Xi(x,i)+\dfrac{x^\top b(i)x}{|x|^2}+\sum_{j=1}^d\left(\dfrac12\dfrac{x^\top\sigma_j^\top(i)\sigma_j(i)x}{|x|^2}-\dfrac{(x^\top\sigma_j^\top x)^2}{|x|^4}\right)\\
\leq& \Xi(x,i)+\Lambda_{b(i)}+\dfrac{\Lambda_{a(i)}}2-\sum_{j=1}^d\rho_{\sigma_j(i)}^2
\end{aligned}
\end{equation}
where
$\Xi(x,i)$ satisfies that
$\lim_{x\to\infty} \sup_{i\in\N}\Xi(x,i)=0$.
Let $H>0$ such that
$\Xi(x,i)\leq \eps_0$ for any $|x|\geq H$.
Let $\tau=\inf\{t\geq0: |X(t)|<H\}$.
By It\^o's formula and \eqref{e1-thm3.1},
$$
\begin{aligned}
\ln |X(t\wedge\tau_{H})|=&\ln |X(0)| +\int_0^{t\wedge\tau_{H}}\LL\ln|X(s)| ds\\
&+\int_0^{t\wedge\tau_{H}} \dfrac{X^\top(s)\sigma(X(s),\alpha(x))}{|X(s)|^2}dW(s)ds\\
\leq&\ln |X(0)|+G(t)
\end{aligned}
$$
where \bea \ad G(t)=\int_0^{t\wedge\tau_{H}}c(\alpha(s)) ds+\int_0^{t\wedge\tau_{H}} \wdt\sigma(X(s),\alpha(x))dW(s)ds,\\
\ad
c(i)=\eps_0+\Lambda_{b(i)}+\dfrac12\Lambda_{a(i)}
-\sum_{j=1}^d\rho_{\sigma_j(i)}^2 \ \hbox{ and }\\ \ad \wdt\sigma(x,i)=\dfrac{x^\top\sigma(x,i)}{|x|^2}.\eea
In view of \eqref{e3-ex2} and the boundedness of $\sigma(i)$,
we have
$$M_\sigma=\sup_{(x,i)\in\R^\nz\times\N}\{|\wdt\sigma(x,i)|\}<\infty.$$
By It\^o's formula,
$$\begin{aligned}
e^{\theta G(t)}=& 1+\int_0^{t\wedge\tau_H}e^{\theta G(s)}\left[\theta c(\alpha(s))+\dfrac{\theta^2}2\wdt\sigma(X(s),\alpha(s))\right]ds\\
&+
\theta\int_0^{t\wedge\tau_H}e^{\theta G(s)}\wdt\sigma(X(s),\alpha(s))dW(s),
\end{aligned}
$$
which leads to
$$
\begin{aligned}
\E_{\phi,i}e^{\theta G(t)}
=&1+\E_{x,i}\int_0^{\tau_H\wedge t}e^{\theta G(s)}\left[\theta c(\alpha(s))+\dfrac{\theta^2}2\wdt\sigma(X(s),\alpha(s))\right]ds\\
\leq&1+ [\bar c+M_\sigma]\E_{\phi,i}\int_0^{\tau_H\wedge t}e^{\theta G(s)}ds\\
\leq&1+[\bar c+M_\sigma]\int_0^{t}\E_{\phi,i}e^{\theta G(s)}ds.
\end{aligned}
$$
where $\bar c=\sup_{i\in\N}\{|c(i)|\}<\infty$.
In view of Gronwall's inequality, for any $t\geq0$ and $(\phi,i)\in \D_h\times\N$, we have
\begin{equation}\label{e12-thm3.2}
\E_{\phi,i}e^{\theta G(t)}\leq e^{\theta[\bar c+M_g]t},\, \theta\in[-1,1].
\end{equation}
On the other hand,
we have
\begin{equation}\label{e7-thm3.2}
\begin{aligned}
\E_{\phi,i}G(t)&\leq\E_{x,i}\int_0^{\tau_H\wedge t}c(\alpha(s))ds\\
&\leq\E_{\phi,i}\int_0^{t}c(\alpha(s))ds-\E_{\phi,i}\int_{\tau_H\wedge t}^tc(\alpha(s))ds\\
&\leq\E_{\phi,i}\int_0^{t}c(\alpha(s))ds+t\bar c\PP_{\phi,i}\{\tau_H<t\}.
\end{aligned}
\end{equation}
Let $\wdt\eps>0$ such that
\begin{equation}\label{e3-thm3.1}
\bar c\wdt\eps-(1-\wdt\eps)0.8\lambda \leq -0.7
\end{equation}
Because of the uniform ergodicity of $\hat\alpha(t)$,
there exists a $T>10\bar c r\lambda^{-1}+r$ such that
\begin{equation}\label{e8-thm3.2}
\E_{i}\int_0^{T-r} c(\hat\alpha(s))ds\leq -0.9\lambda T,\text{ for any } i\in\N.
\end{equation}
By Lemma \ref{lm3.4},
there exists an $H_1>H$ such that
\begin{equation}
\E_{\phi,i}\int_0^{T-r}c(\alpha(s))ds\leq -0.8\lambda T, \|\phi\|>H_1, i\in\N,
\end{equation}
where $T_2=(m_0+1)T$.
In light of Lemma \ref{lm3.2}, there exists an $H_2>H_1$ such that
\begin{equation}\label{e10-thm3.2}
\bar c\PP_{\phi,i}\{\tau_H<T+r\}\leq 0.1\lambda\,\text{ provided } \|\phi\|\geq H_2, i\in\N.
\end{equation}
and
$$\PP_{\phi,i}\{ X(t)\geq H_1, t\in[0,r]\}>1-\wdt\eps \
\hbox{ if }\ |\phi(0)|\geq H_2.$$
Using these and  the Markov property of $(X_t,\alpha(t))$,
we have
\begin{equation}\label{e9-thm3.2}
\begin{aligned}
\E_{\phi,i}\int_0^{T}c(\alpha(s))ds
=&\E_{\phi,i}\int_0^{r}c(\alpha(s))ds+\E_{\phi,i}\left(\1_{\{|X_t|\geq H_1, t\in[0,r]\}}\int_r^{T}c(\alpha(s))ds\right)\\
&+\E_{\phi,i}\left(\1_{\{|X_t|< H_1\,\text{ for some }t\in[0,r]\}}\int_r^{T}c(\alpha(s))ds\right)\\
\leq&\bar c (r+\wdt\eps T)-(1-\wdt\eps)0.8\lambda T\\
\leq& -0.6\lambda T.
\end{aligned}
\end{equation}
where the last inequality follows from $T>10\bar c r\lambda^{-1} $
and \eqref{e3-thm3.1}.
Applying \eqref{e9-thm3.2} and \eqref{e10-thm3.2} to \eqref{e7-thm3.2},
we obtain
\begin{equation}\label{e11-thm3.2}
\E_{x,i}G(T)\leq -0.5\lambda T \,\text{ if }\, |\phi(0)|\geq H_2.
\end{equation}
By Lemma \ref{lm-a3}, it follows from \eqref{e12-thm3.2} and \eqref{e11-thm3.2} that for $\theta\in[0,0.5], 0<|x|<h_2, i\leq k_0,$ we have
\begin{equation}\label{e13-thm3.2}
\begin{aligned}
\ln \E_{x,i}e^{\theta G(T)}\leq& \theta\E_{x,i}G(T)+\theta^2 \wdt K
\leq-0.5\theta \lambda+\theta^2 \wdt K
\end{aligned}
\end{equation}
for some $\wdt K>0$ depending on $T, \bar c$, and $M_\sigma$.
Let $\theta\in(0,0.5]$ such that
$
\theta\wdt  K<\frac{\lambda_1 T}4$.
We have
$$
\ln \E_{\phi,i}e^{\theta H(T)}\leq -\dfrac{\theta\lambda_1 T}4\,\text{ for }|\phi(0)|\geq H_2,
$$
or equivalently,
\begin{equation}\label{e15-thm3.2}
\E_{x,i}e^{\theta H(T)}\leq e^{-\frac{\theta\lambda_1 T}4}\,\text{ for }|\phi(0)|\geq H_2.
\end{equation}
Exponentiating both sides of the inequality
$$\ln|X(T\wedge\tau_H|\leq \ln |X(0)|+G(T),$$
we have from \eqref{e15-thm3.2} that
\begin{equation}\label{e16-thm3.2}
\E_{\phi,i} |X(T\wedge\tau_H)|^\theta\leq |\phi(0)|^\theta e^{-\frac{\theta\lambda_1 T}4}\,\text{ for }|\phi(0)|\geq H_2,
\end{equation}
where $\rho=e^{-\frac{\theta\lambda_1 T}4}<1$.
Let $\xi:=\inf\{k\in\mathbb{N}: X(kT\wedge \tau_H)\leq H_2\}$.
Applying the Markov property of $(X_t,\alpha(t))$ to \eqref{e16-thm3.2} yields
$$
\E_{\phi,i}\1_{\{\xi\geq k+1\}}|X((kT+T))|^\theta
\leq \rho \E_{\phi,i}\1_{\{\xi\geq k\}}|X(kT)|^\theta
.$$
Using this recursively, we have
$$
\PP_{\phi,i}\{\xi\geq k+1\}\leq \dfrac{\E_{\phi,i}\1_{\{\xi\geq k+1\}}|X((kT+T))|^\theta}{H_2^\theta}
\leq \rho^k\dfrac{|\phi(0)|^\theta}{H_2^\theta}.$$
Thus,
\begin{equation}\label{e5-thm3.1}
\E_{\phi,i}\tau_{H_2}\leq T\E_{\phi,i}\xi
\leq\dfrac{|\phi(0)|^\theta}{H_2^\theta}\sum_{k=0}^\infty (k+1)\rho^k
=:C_1 |\phi(0)|^\theta.
\end{equation}
In view of Lemma \ref{lm3.3},
there exists a $T_3>r$ such that
\begin{equation}\label{e6-thm3.1}
\PP_{\phi,i}\Big\{\alpha(t)\leq k_0\,\text{ for some }\, t\in[r,T_3]\Big\}>\frac34
\text{ if }|\phi(0)|\leq H_2.
\end{equation}
On the other hand, By Lemma \ref{lm3.1}, there exists an $H_3>0$ satisfying
\begin{equation}\label{e7-thm3.1}
\PP_{\phi,i}\Big\{|X(t)|\leq H_3, t\in[0,T_3]\Big\}>\frac34
\text{ if }|\phi(0)|\leq H_2.
\end{equation}
Combining \eqref{e6-thm3.1} and \eqref{e7-thm3.1} yields
\begin{equation}\label{e8-thm3.1}
\PP_{\phi,i}\Big\{\|X_t\|\leq H_3\text{ and } \alpha(t)\leq k_0 \text{ for some } t\in[r,T_3]\Big\}>\frac12
\text{ if }|\phi(0)|\leq H_2.
\end{equation}
Define stopping times
$$\tau^{(1)}=\tau_{H_2}=\inf\{t\geq 0: |X(t)|\leq H_2\},$$
$$\tau^{(k)}=\inf\left\{t\geq T_3+\tau^{(k-1)}: |X(t)|\leq H_2\right\}\ \text{ for } \ k\geq 2,$$
and events
$$B_k=\left\{\|X_t\|\leq H_3\text{ and } \alpha(t)\leq k_1 \text{ for some } t\in\left[\tau^{(k)},\tau^{(k)}+T_3\right]\right\}, k\geq0.$$
Let $\sigma^{(1)}=\tau^{(1)}, \sigma^{(k)}=\tau^{(k)}-\tau^{(k-1)}, k\geq 2$.
Note that
if $B_k$ occurs, then $\vartheta\leq \tau^{(k)}+T_3$.
By the strong Markov property of $(X_t,\alpha(t))$, \eqref{e5-thm3.1}
$$
\begin{aligned}
\PP_{\phi,i}\{\cap_{k=1}^{n+1}B^c_k\}
=&\E_{\phi,i}\left[\1_{\{\cap_{k=1}^nB^c_k\}}\E\left[B^c_{n+1}\big|\F_{\tau^{(n+1)}}\right]\right]\\
\leq&\dfrac12\E_{\phi,i}\left[\1_{\{\cap_{k=1}^nB^c_k\}}\right]\\
=&\dfrac12\PP_{\phi,i}\{\cap_{k=1}^{n}B^c_k\}.
\end{aligned}
$$
By induction, we have
$$
\PP_{\phi,i}\{\cap_{k=n_0}^{n+1}B^c_k\}\leq 2^{-n+n_0},
$$
which leads to
$$
\PP_{\phi,i}\{\cap_{k=n_0}^{\infty}B^c_k\}=0.
$$
As a result,
\begin{equation}\label{e13-thm3.1}
\sum_{n=n_0}^\infty\PP_{\phi,i}\{B_n\cap_{k=n_0}^{n-1}B^c_k\}
=1,
\end{equation}
and
\begin{equation}\label{e14-thm3.1}
\begin{aligned}
\E_{\phi,i}\vartheta
=&\sum_{n=1}^\infty\E_{\phi,i}\left[\vartheta\1_{\{\B_n\cap_{k=1}^{n-1}B^c_k\}}\right]\\
\leq& \sum_{n=1}^\infty\E_{\phi,i}\left[[\tau_{H_2}^{n}+T_3]\1_{\{\B_n\cap_{k=1}^{n-1}B^c_k\}}\right]\\
\leq& T_3+\sum_{n=1}^\infty\E_{\phi,i}\left[\sum_{l=1}^n\sigma_{H_2}^{(l)}\1_{\{\B_n\cap_{k=1}^{n-1}B^c_k\}}\right]\\
=&T_3+\sum_{l=1}^\infty\E_{\phi,i}\left[\sigma_{H_2}^{(l)}\sum_{n=l}^\infty\1_{\{\B_n\cap_{k=1}^{n-1}B^c_k\}}\right]\\
=&T_3+\sum_{l=1}^\infty\E_{\phi,i}\left[\sigma_{H_2}^{(l)}\1_{\{\cap_{k=1}^{l-1}B^c_k\}}\right]\,\text{ (due to \eqref{e13-thm3.1})}.
\end{aligned}
\end{equation}
By the strong Markov property
and
$$
\E_{\phi,i}\left[|X(\tau^{(l-1)}+T_3)|^\theta\big|\F_{\tau^{(l-1)}}\right]
\leq K_1(1+H_2)e^{K_1T_3}.
$$
Using this, \eqref{e5-thm3.1} and the strong Markov property again,
we have
$$
\E_{\phi,i}\left[\tau^{(l)}-\tau^{(l-1)}-T_3)\big|\F_{\tau^{(l-1)}}\right]
\leq C_1K_1(1+H_2)e^{K_1T_3}.
$$
Thus,
\begin{equation}\label{e15-thm3.1}
\E_{\phi,i}\left[\sigma^{(l)}\big|\F_{\tau^{(l-1)}}\right]\leq M_2:=C_1K_1(1+H_2)e^{K_1T_3}.
\end{equation}
Applying the strong Markov property and \eqref{e15-thm3.1}, we have
\begin{equation}\label{e16-thm3.1}
\begin{aligned}
\E_{\phi,i}\left[\sigma^{(l)}\1_{\{\cap_{k=1}^{l-1}B^c_k\}}\right]
=&\E_{\phi,i}\left[\1_{\{\cap_{k=1}^{l-1}B^c_k\}}\E\left[\sigma^{(l)}\big|\F_{\tau^{l-1}}\right]\right]\\
\leq&M_2\E_{\phi,i}\left[\1_{\{\cap_{k=1}^{l-1}B^c_k\}}\right]\\
\leq&M_22^{-l+1},\,l\geq 2.
\end{aligned}
\end{equation}
Subsequently, it follows from
\eqref{e14-thm3.1}, \eqref{e16-thm3.1}, and \eqref{e5-thm3.1} that
$$
\begin{aligned}
\E_{\phi,i}\vartheta &\!\disp \leq T_3+\sum_{l=1}^\infty\E_{\phi,i}\left[\sigma^{(l)}
\1_{\{\cap_{k=1}^{l-1}B^c_k\}}\right]\\
&\! \disp
\leq T_3+C_1|\phi(0)|^\theta+\sum_{l=2}^\infty M_22^{-l+1}<\infty.
\end{aligned}
$$
Then, using the arguments in \cite[Theorem 3.3]{DY2},
we can show that
 $(X_t,\alpha(t))$ is positive recurrent.
 Moreover, there is a unique invariant probability measure $\mu^*$, and for any $(\phi, i)\in\C\times\N$
$$\lim_{t\to\infty}\|P(t,(\phi,i),\cdot)-\mu^*\|_{TV}=0.$$
\end{proof}

\section{Weak Stabilization}\label{sec:5}
In this section, our goal is to design suitable controls so that the regime-switching
diffusion (2.2) is positive recurrent. Consider the controlled regime-switching diffusion
\begin{equation}
\begin{aligned}
dX(t)=&\big[b(X(t),\alpha(t))+B(\alpha(t))u(X(t),\alpha(t))\big]dt+\sigma(X(t),\alpha(t))dW(t).
\end{aligned}
\end{equation}
The system is often observable only when it operates in some modes but not all.
We therefore suppose that there two disjoint
subsets $\M_1$ and $\M_2$, where for each mode $i\in \M_2$, the
process (6.2) cannot be stabilized by feedback control, but it can be stabilized for
each $i\in \M_1=\N\setminus\M_2$.
We consider feedback control of the form
$u(X(t), \alpha(t)) = −L(\alpha(t))X(t)$,
where for each $i\in \N$, $L(i)\in \R^{\nz\times \nz}$ is a constant matrix. Moreover, if $i\in \M_2$,
then $L(i) = 0$.
The controlled system becomes
\begin{equation}
\begin{aligned}
dX(t)=&\big[b(X(t),\alpha(t))-B(\alpha(t))L(\alpha(t))X(t)\big]dt\\
&+\sigma(X(t),\alpha(t))dW(t).
\end{aligned}
\end{equation}
As a corollary of Theorem \ref{thm:main}, we have
\begin{thm}
If for each $i\in\M_1$, there exists constant matrix $L(i)\in\R^{\nz\times \nz}$
such that
$$
\begin{aligned}
\sum_{i\in\M_1}&\nu_i\left(2\Lambda_{b(i)-B(i)L(i)}+\sum_{j=1}^d\Big(\Lambda_{a_j(i)}-\rho_{\sigma_j(i)}^2\Big)\right)\\
+&\sum_{i\in\M_2}\nu_i\left(2\Lambda_{b(i)}+\sum_{j=1}^d\Big(\Lambda_{a_j(i)}-\rho_{\sigma_j(i)}^2\Big)\right)
<0
\end{aligned}
$$
then the controlled regime-switching system
is weakly stabilizable $($i.e., the controlled
regime-switching diffusion is positive recurrent$)$.
\end{thm}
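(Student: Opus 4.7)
The plan is to treat this as a direct corollary of Theorem~\ref{thm:main} applied to the closed-loop system. Define the effective drift
\[
\tilde{b}(x,i) := b(x,i) - B(i)L(i)x, \qquad i\in\N,
\]
with the convention $L(i)=0$ for $i\in\M_2$, so that the controlled SDE has the same form as \eqref{eq:sde} with $b$ replaced by $\tilde{b}$ (and $\sigma$ unchanged). The first step is to verify that all standing assumptions (Assumptions~\ref{asp2.1}--\ref{asp3.2}) persist for the new system. Assumptions~\ref{asp2.1} and \ref{asp2.2} survive because we are only adding a globally Lipschitz linear-in-$x$ term to the drift, which preserves local Lipschitz continuity, ellipticity, and irreducibility. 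Assumption~\ref{asp3.2} involves only $q_{ij}$ and is unaffected. For Assumption~\ref{asp3.1}, the key observation is that the linearization of $\tilde{b}$ around infinity has linear part
\[
\tilde{b}_{\mathrm{lin}}(i) \;=\; b(i) - B(i)L(i)\quad (i\in\M_1),\qquad \tilde{b}_{\mathrm{lin}}(i) \;=\; b(i)\quad (i\in\M_2),
\]
while the nonlinear residual $\hat{b}$ is exactly the one appearing in the original Assumption~\ref{asp3.1}, so \eqref{e3-ex2} still holds (here one uses the implicit assumption that $B(i)L(i)$ is uniformly bounded in $i$, which comes from the boundedness of $b(i)$ and the structure of the control).

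The second step is to reconcile the corollary's hypothesis with the hypothesis of Theorem~\ref{thm:main} applied to the controlled system. Theorem~\ref{thm:main} requires
\[
\sum_{i\in\N}\nu_i\left(\Lambda_{\tilde b_{\mathrm{lin}}(i)}+\tfrac{1}{2}\Lambda_{a(i)}-\sum_{j=1}^{d}\rho_{\sigma_j(i)}^2\right)<0,
\]
equivalently, after multiplying by $2$,
\[
\sum_{i\in\N}\nu_i\left(2\Lambda_{\tilde b_{\mathrm{lin}}(i)}+\Lambda_{a(i)}-2\sum_{j=1}^{d}\rho_{\sigma_j(i)}^2\right)<0.
\]
Splitting the sum over $\M_1$ and $\M_2$ recovers the coefficients $2\Lambda_{b(i)-B(i)L(i)}$ and $2\Lambda_{b(i)}$ from the corollary. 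The remaining discrepancy is between $\Lambda_{a(i)}-2\sum_j\rho_{\sigma_j(i)}^2$ (Theorem~\ref{thm:main}) and $\sum_j(\Lambda_{a_j(i)}-\rho_{\sigma_j(i)}^2)$ (corollary). Since $a(i)=\sum_j a_j(i)$ is a sum of symmetric positive semidefinite matrices, Weyl's inequality gives $\Lambda_{a(i)}\leq \sum_j \Lambda_{a_j(i)}$; together with $\rho_{\sigma_j(i)}^2\geq 0$, this yields the pointwise estimate
\[
\Lambda_{a(i)}-2\sum_{j=1}^{d}\rho_{\sigma_j(i)}^2 \;\leq\; \sum_{j=1}^{d}\Lambda_{a_j(i)}-\sum_{j=1}^{d}\rho_{\sigma_j(i)}^2.
\]
Consequently, the corollary's hypothesis is a (stronger) sufficient condition for the hypothesis of Theorem~\ref{thm:main} applied to the controlled system.

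The final step is to invoke Theorem~\ref{thm:main} directly to conclude that the controlled switching diffusion $(X_t,\alpha(t))$ is positive recurrent, and moreover admits a unique invariant probability measure to which transition probabilities converge in total variation. The only non-routine point I anticipate is the verification of the linearizability assumption \ref{asp3.1} for the controlled system, specifically the uniform-in-$i$ boundedness of the new linear part $b(i)-B(i)L(i)$; this should either be stated as an additional hypothesis on $B$ and $L$, or justified by noting that in practice $L(i)$ is nontrivial only on a finite set of modes in $\M_1$, making the boundedness automatic. Once that bookkeeping is settled, the rest of the argument is essentially a translation between the two formulations of the stability criterion.
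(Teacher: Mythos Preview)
Your approach matches the paper's: the paper gives no proof at all and simply labels the result ``a corollary of Theorem~\ref{thm:main},'' which is exactly the route you take by applying Theorem~\ref{thm:main} to the closed-loop drift $\tilde b(x,i)=b(x,i)-B(i)L(i)x$. Your verification that Assumptions~\ref{asp2.1}--\ref{asp3.2} persist, and your handling of the mismatch between the diffusion terms $\Lambda_{a(i)}-2\sum_j\rho_{\sigma_j(i)}^2$ in Theorem~\ref{thm:main} and $\sum_j(\Lambda_{a_j(i)}-\rho_{\sigma_j(i)}^2)$ in the corollary via Weyl's inequality, are details the paper leaves entirely implicit; your write-up is in fact more careful than the paper on this point.
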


\section{Examples}\label{sec:6}
This section is devoted to several examples.
They are intended for demonstration purpose.

\begin{exm}\label{ex3}
{\rm It is well known that an Ornstein-Uhlenbeck (OU) process is
a non-trivial example of a process being stationary, Gaussian, and Markov.
Such processes
have been used in many applications, for example in finance.
One of the distinct properties is that it can be used to
delineate mean reversion.
In this example we
consider a switched Ornstein-Uhlenbeck process
$$
dX(t)=\theta(\alpha(t))(\mu(\alpha(t))-X(t))dt+\sigma(\alpha(t))dW(t)
$$
where $\theta(i),\mu(i),\sigma(i)$ are bounded,
$X(t)$ is real-valued,
$\sigma(i^*)\ne 0$ for some $i^*\in\N$.
Suppose that
$$
q_{ij}(\phi)
=
\begin{cases}
1+\dfrac{c_i}{\|\phi\|+1}&\text{ if } j\in\{1, 2, i+1\}, j\ne i \text{ or } i=1, j=3.\\
-2-\dfrac{2c_i}{\|\phi\|+1}&\text{ if } i=j\in\{1,2\}\\
-3-\dfrac{3c_i}{\|\phi\|+1}&\text{ if } i=j>2\\
\end{cases}
$$
where $c_i$ are positive and bounded.
Thus, the limit at infinity of $Q(\phi)$ is
$$
\hat Q=\left(\begin{array}{cccccc}
-2&1&1&0&0&\cdots\\
1&-2&1&0&0&\cdots\\
1&1&-3&1&0&\cdots\\
1&1&0&-3&1&\cdots\\
\vdots&\vdots&\vdots&\vdots&\vdots&\ddots\\
\end{array}\right).
$$
Applying \cite[Proposition 3.3]{WA},
we can show that the Markov chain
$\hat\alpha(t)$ with generators $\hat Q$
is strongly ergodic.
Solving the system
$$\bnu \hat Q=0, \sum\bnu_i=1$$
we obtain that the invariant measure of $\hat\alpha(t)$
is
$\nu_1=\nu_2=\dfrac{1}{3}, \nu_k=\dfrac{2}{3^{k-1}}, k\geq 3.$
If
$\sum_i \theta(i)\nu(i)<0$
then the system is positive recurrent.
}\end{exm}

\begin{exm}{\rm
Suppose $Q(\phi)\equiv\hat Q$
is the generator of a strongly ergodic Markov chain
with invariant probability measure $\bnu=(\nu_1,\nu_2,\dots)$
where $\nu_i>0$ for any $i\in\N$.
Let $A(i)$ be a $2\times 1$
vector and $B(i)$ a $2\times 2$
matrix; both of them are bounded in $i\in\N$.
Consider the diffusion $X(t)=(X_1(t),X_2(t))$ in $\R^2$
given by
$$
\begin{aligned}
dX(t)=&\Big(B(\alpha(t))X(t)+ \dfrac{A(\alpha(t))}{1+|X(t)|}
\Big)dt\\
&+\dfrac{|X_1(t)|+|X_2(t)|}{2+|X_1(t)|+|X_2(t)|}\Big(c_{1,\alpha(t)}X_1(t)dW_1(t),c_{2,\alpha(t)}X_2(t)dW_2(t)\Big)^\top.	
\end{aligned}
$$
In particular,
suppose that
$A(i)\ne 0$, $c_{1,i}, c_{2,i}\in\R\setminus\{0\}$ are bounded,
Then
 \bea \ad a_1(i):=\Lambda_{B(i)}, \\ 
\ad a_2(i):=\Lambda\Big(a(i)\Big)=c_{1,i}^2\vee c_{2,i}^2, \\
\ad a_3(i):=\sum_{j=1}^2 \rho_{\sigma_j(i)}^2= c_{1,i}^2\wedge c^2_{2,i}.\eea

Note that the diffusion is degenerate at $0$,
but the drift at $0$ is non-zero.
Thus, the positive recurrence of the system
can still be obtained if
$$\sum_i \nu_i\Big(a_1(i)+\dfrac{a_2(i)}2-a_3(i)\Big)<0.$$

}\end{exm}

\begin{exm}\label{ex2} {\rm
This example is motivated by a controlled stochastic dynamic system that is linear in the continuous state variable $x$.
To be more specific, consider the following
scalar switching diffusion
\begin{equation}\label{e1-ex2}
\begin{aligned}
d X(t)=&\big[C(\alpha(t))+A(\alpha(t))X(t)+B(i)u(t)\big]dt
+\sigma(\alpha(t))X(t)dW_1(t)+dW_2(t),
\end{aligned}
\end{equation}
where $A(i), B(i), C(i),\sigma(i), c_i$ are bounded.
$W_1, W_2$ are two independent Brownian motions.
Let $Q(\phi)=\wdt Q(|\phi(-r)|)$ where
$$
\wdt Q(x)=\left(\begin{array}{ccccc}
\frac{-x}{c_1+x}&\frac{x}{c_1+x}&0&0&\cdots\\
\frac{x}{c_2+x}&\frac{-2x}{c_2+x}&\frac{x}{c_2+x}&0&\cdots\\
\frac{x}{c_3+x}&0&\frac{-2x}{c_3+x}&\frac{x}{c_3+x}&\cdots\\
\vdots&\vdots&\vdots&\vdots&\vdots\\
\end{array}\right).
$$
and $c_i, i=1,2,\dots,$ are bounded positive constants.
Thus, the limit at infinity of $Q(\phi)$ is
$$
\hat Q=\left(\begin{array}{ccccc}
-1&1&0&0&\cdots\\
1&-2&1&0&\cdots\\
1&0&-2&1&\cdots\\
\vdots&\vdots&\vdots&\vdots&\vdots\\
\end{array}\right).
$$
By \cite[Proposition 3.3]{WA},
it is easy to verify that the Markov chain
$\hat\alpha(t)$ with generators $\hat Q$
is strongly ergodic.
Solving the system
$$\bnu \hat Q=0, \  \sum_i\bnu_i=1$$
we obtain that the invariant measure of $\hat\alpha(t)$
is
$$\{\nu_i: i \in \N\}=\{2^{-i}: 1\le i< \infty\}.$$
Let $u(t)=-L(i)X(t)$
we have
$$dX(t)=\big[v_i+[A(i)-B(i)L(i)]X(t)dt+\sigma(i)X(t)dW_1(t)+dW_2(t).$$
Suppose that we can control only on mode $i=1$.
Thus, $L(i)=0$ if $i\geq 2$.
If $B(1)>0$,
choosing $L(i)$ sufficiently large that
$$A(1)-B(1)L(1)-\dfrac12\sigma(1)<-\sum_{i=2}^n 2^{i-1}\Big[A(i)-\dfrac12\sigma(i)\Big]$$
 then the controlled system is weakly stabilized.
}\end{exm}

\section{Final Remarks}\label{sec:rem}
This work developed more verifiable conditions on
 on recurrence and positive recurrence and related issues for switching diffusions with the switching taking values in a countably infinite set and depending on the history of the state process. It also developed feedback strategies for weakly stablization.
 For feedback controls of diffusions, we refer to the excellent work of Yong and Zhou \cite{YongZ}.
 
 For the systems under consideration, there 
 are numerous applications and potential applications. As one particular example, we mention the recent work \cite{HNY17}. 
 Extending the effort of treating stochastic population growth in spatially heterogeneous environments to include random environment
 modeled by stochastic switching processes is both theoretically interesting and practically useful. Moreover, effort may also be directed to treating regime-switching models in
  related work on other ecological and biological applications; see for example \cite{DDNY16,DY1a} among others.

\appendix
\section{Appendix}\label{sec:apd}
\subsection{Preliminary Results}
Recently, a functional It\^o formula was developed in \cite{Dup09}, which encouraged subsequent development (for example, \cite{CF}). We briefly recall the main idea in what follows.

Now we state the functional It\^o formula for our process (see \cite{CF} for more details).
Let $\DD$ be the space of cadlag functions $f:[-r,0]\mapsto\R^n$.
For $\phi\in\DD$, we define horizontal and vertical perturbations for $h\ge 0$ and $y\in \R^n$ as
$$
\phi_h(s)=
\begin{cases}\phi(s+h)\, \text{ if }\, s\in[-r,-h],\\
 \phi(0) \, \text{ if }\,s\in[-h,-0],
\end{cases}
$$
and
$$
\phi^y(s)=
\begin{cases}\phi(s)\, \text{ if }\, s\in[-r,0),\\
 \phi(0)+y,
\end{cases}
$$
respectively.
Let $V:\DD\times\N\mapsto\R$.
The horizontal derivative at $(\phi,i)$ and vertical partial derivative of $V$ are defined as
\begin{equation}\label{e.dt}
V_t(\phi,i)=\lim\limits_{h\to0} \dfrac{V(\phi_h,i)-V(\phi)}h
\end{equation}
and
\begin{equation}\label{e.dx}
\partial_i V(\phi,i)=\lim\limits_{h\to0} \dfrac{V(\phi^{he_i},i)-V(\phi)}h
\end{equation}
if these limits exist.

Consider functions of the form
 $$V(\phi,i)=f_1(\phi(0),i)+\int_{-r}^0g(t,i)f_2(\phi(t),i)dt,$$
where $f_2(\cdot,\cdot):\R^\nz\times\N\mapsto\R$ is a continuous function and $f_1(\cdot,\cdot):\R^\nz\times\N\mapsto\R$ is a function that is twice continuously differentiable in the first variable
and $g(\cdot,\cdot):\R_+\times\N\mapsto\R$ be a continuously differentiable function in the first variable.
Then at $(\phi,i)\in\C\times\N$ we have
$$V_t(\phi,i)=g(0,i)f_2(\phi(0),i)-g(-r,i)f_2(\phi(-r),i)-\int_{-r}^0 f_2(\phi(t),i)dg(t,i),$$
$$\partial_k V(\phi,i)=\dfrac{\partial f_1}{\partial x_k}(\phi(0),i),\qquad \partial_{kl} V(\phi,i)=\dfrac{\partial^2 f_1}{\partial x_k\partial x_l}(\phi(0),i).$$

Let $V(\cdot,\cdot)\in\BF$, we define the operator
\begin{equation}\label{e:LV}
\begin{aligned}
\LL V(\phi, i)
=&V_t(\phi,i)+
\sum_{k=1}^nb_k(\phi(0),i)V_k(\phi,i)
+\dfrac12
\sum_{k,l=1}^na_{kl}(\phi(0),i)V_{kl}(\phi,i)\\
&+\sum_{j=1,j\ne i}^\infty q_{ij}(\phi)\big[V(\phi,j)-V(\phi,i)\big],
\end{aligned}
\end{equation}
for any bounded stopping time $\tau_1\leq\tau_2$,
we have the functional It\^o formula:
\begin{equation}\label{f.Ito}
\E V(X_{\tau_2},\alpha(\tau_2))=\E V(X_{\tau_1},\alpha(\tau_1))+\E\int_{\tau_1}^{\tau_2}\LL V(X_s,\alpha(s))ds
\end{equation}
if the expectations involved exist. Equation
\eqref{f.Ito} is obtained by applying the functional It\^o formula for general semimartingales
given in \cite{CF1,CF}
specialized to our processes.

\subsection{Proofs of Lemmas}

\begin{proof}[Proof of Lemma {\rm\ref{lm3.1}}]
Since there is a $K>0$ such that
$$\|b(x,i)\|+\|\sigma(x,i)\|\leq K(1+|x|).$$
Using this,
we obtain the lemma from standard arguments (e.g., \cite{XM, ZY2}).
\end{proof}

\begin{proof}[Proof of Lemma {\rm\ref{lm3.2}}]
In view of \eqref{e1-thm3.1} and \eqref{e3-ex2},
there
are $H_4>0$ and $H_5>0$ such that
$$\LL_i \ln |x|\leq H_5, \text{ and } |\wdt\sigma(x,i)|\leq H_5 \text{ if } |x|\geq H_4.$$
Since $\LL_i\ln|x|$ and $\wdt\sigma(x,i)$ are bounded
it is easy to prove that, for any $T>0$, there exists a $K_4>0$ such that
$$
\begin{aligned}
\E_{\phi,i}\sup_{t\in[0,T]}\bigg|
\int_0^{t\wedge\tau_{H_4}}\Big[&\LL\ln|X(s)| ds+\wdt\sigma(X(s),\alpha(x))dW(s)ds\Big]\bigg|\leq K_4
\end{aligned}
$$
for any $(\phi,i)\in\C\times\N$.
Thus, for any $\eps>0$,
$$
\begin{aligned}
\PP_{\phi,i}\left\{\sup_{t\in[0,T]}\bigg|
\int_0^{t\wedge\tau_{H_4}}\Big[\LL\ln|X(s)| ds+\wdt\sigma(X(s),\alpha(x))dW(s)ds\Big]\bigg|> K_4\eps^{-1}\right\}<\eps.
\end{aligned}
$$
As a result,
\begin{equation}\label{e3-lm3.2}
\PP_{\phi,i}\left\{\inf_{t\in[0,T]}\ln X(t\wedge\tau_{H_4})>\ln |x|-K_4\eps^{-1}\right\}>1-\eps.
\end{equation}
For any $K_2>0$,
let $K_3$ so large that $\ln K_3>\ln (K_2\vee H_4)+K_6\eps^{-1}$.
We obtain from \eqref{e3-lm3.2}
that
$$
\PP_{\phi,i}\left\{\sup_{t\in[0,T]}\ln |X(t)|>\ln (K_2\vee H_4)\right\} >1-\eps
$$
if $|\phi(0)|>K_3$,
which completes the proof.
\end{proof}

\begin{proof}[Proof of Lemma {\rm\ref{lm3.3}}]
Let $\eta(k)=0$ for $k\leq k_0$.
By It\^o's formula, we have for $i>k_0$ that
$$
\begin{aligned}
\E_{\phi,i}\eta(\alpha(t\wedge\zeta))
=&\eta_i+\E_{\phi,i}\int_0^{t\wedge\zeta}\sum_{j\in\N}
q_{\alpha(s),j}\eta(j)ds\\
\leq& \eta_i+\E_{\phi,i}\int_0^{t\wedge\zeta}\sum_{j>k_0}q_{\alpha(s),j}\eta(j)\\
\leq& \eta_i-\E_{\phi,i}(t\wedge\zeta)
\end{aligned}
$$
thus
$\E_{\phi,i}(t\wedge\zeta)\leq\eta_i$
for any $t\geq0$.
Letting $t\to\infty$ we have
$\E_{\phi,i}\zeta\leq \eta_i\leq\sup_{i\in\N}\eta_i<\infty$.
My the assumptions \eqref{asp3.1} and \eqref{asp3.2},
we also have
$\sum_{j>k_0} \hat q_{ij}(\phi)\eta_j\leq -1\,\text{ for any }i>k_0.$
By \cite[Proposition 6.3.3]{WA},
$\alpha(t)$ is strongly ergodic.
\end{proof}

\begin{proof}[Proof of Lemma {\rm\ref{lm3.4}}]
By the basic coupling method (see e.g., \cite[p. 11]{Chen}),
we can consider the joint process $(X(t),\alpha(t),\hat\alpha(t))$
as a switching diffusion
where the diffusion $X(t)\in\R^\nz$ satisfies
satisfying
\begin{equation}\label{e1-a3}
dX(t)=b(X(t), \alpha(t))dt+\sigma(X(t),\alpha(t))dw(t)
\end{equation}
and the
switching part $(\alpha(t),\hat\alpha(t))\in\N\times\N$ has the generator $\wdt Q(X_t)$ which is defined by
$$\begin{aligned}
\wdt  Q(\phi)\wdt  f(k,l)=&\sum_{j,i\in\N}\wdt q_{(k,l)(j,i)}(\phi)\Big(\wdt  f(j,i)-\wdt  f(k,l)\Big)\\
=&\sum_{j\in\N}[q_{kj}(\phi)-\hat q_{lj}]^+(\wdt  f(j,l)-\wdt  f(k,l))\\
&+\sum_{j\in\N}[\hat q_{lj}-q_{kj}(\phi)]^+(\wdt  f(k,j)-\wdt  f(k,l))\\
&+\sum_{j\in\N}[q_{kj}(\phi)\wedge \hat q_{lj}(0)](\wdt  f(j,j)-\wdt  f(k,l)).
\end{aligned}
$$
Let $\vartheta=\inf\{t\geq0: \alpha(t)\ne\hat\alpha(t)\}$.
Define $\tilde g:\Z\times\Z\mapsto\R$ by $\wdt g(k,l)=\1_{\{k=l\}}$.
By the definition of the function $\wdt g$,  we have
\begin{equation}\label{e3-a3}
\begin{aligned}
\wdt  Q(\phi)\wdt  g(k,k)=&\sum_{j\in\N, j\ne k}[q_{kj}(\phi)-\hat q_{kj}]^++\sum_{j\in\N, j\ne k}[\hat q_{kj}-q_{kj}(\phi)]^+\\
=&\sum_{j\in\N, j\ne k}|q_{kj}(\phi)-\hat q_{kj}|=:\chi(\phi, k).
\end{aligned}
\end{equation}
For any $\eps>0$,
let $H>0$ such that
$\sup_{(\phi,k)\in B_H\times\N} \chi(x,k)<\frac{\eps}{2T}$.
where $B_H:=\{\phi\in\C:|\phi(t)|\geq H, t\in[-r,0]\}.$
Applying  It\^o's formula and noting that $\alpha(t)=\hat \alpha(t), t\leq\vartheta$,  we obtain that
\begin{equation}\label{e4-a3}
\begin{split}
\PP_{\phi,i,i}\{\vartheta\leq T\wedge\tau_H\}=& \E_{\phi,i,i} \wdt g\Big(\alpha(\vartheta\wedge T\wedge\tau_H),\hat \alpha(\vartheta\wedge T\wedge\tau_H)\Big)\\
=&\E_{\phi,i,i}\int_0^{\vartheta\wedge T\wedge\tau_H}\wdt Q(X_t) \wdt  g(\alpha(t),\hat \alpha(t))dt\\
=&\E_{\phi,i,i} \int_0^{\vartheta\wedge T\wedge\tau_H}\chi(\phi, \alpha(t))dt\\
\leq& T\sup_{(\phi,i)\in B_H\times\N}\chi(\phi,k)\leq \dfrac\eps2.
\end{split}
\end{equation}
Thus,
in view of Lemma \ref{lm3.2},
there is $\delta>0$ such that
$\PP_{\phi,i,i}\{\tau_H\leq T\}\leq \dfrac\eps2$.
This together with \eqref{e4-a3} leads to
$$
\PP_{\phi,i,i}\{\vartheta\wedge\tau_H\leq T\}\leq \PP_{\phi,i,i}\{\vartheta\leq T\wedge\tau_H\}+\PP_{\phi,i,i}\{\tau_H\leq T\}\leq \eps.$$

We have that
$$
\begin{aligned}
\Big|\E_{\phi,i}f(\alpha(t))-\E_{i}f(\hat\alpha(t))\Big|
=&\Big|\E_{\phi,i,i}\Big[f(\alpha(t))-f(\hat\alpha(t))\Big]\Big|\\
=&\Big|\E_{\phi,i,i}\1_{\{\vartheta\wedge\tau_H\leq t\}}\Big[f(\alpha(t))-f(\hat\alpha(t))\Big]\Big|\\
\leq &2M_f\PP_{\phi,i,i}\{\vartheta\wedge\tau_H\leq t\}\leq 2M_f\eps,\text{ for } t\in[0,T].
\end{aligned}
$$
where $$M_f=\sup_{i\in\N} |f(i)|.$$
The lemma is therefore proved.
\end{proof}

\begin{proof}[Proof of Lemma {\rm\ref{lm3.4}}]
This lemma concerns basic properties of the Laplace transform.
It can be proved by straightforward arguments in calculus. The proof is therefore omitted.
\end{proof}

\end{document}